\numberwithin{equation}{section}
\newcommand{\RR}{{\mathbb R}}
\newcommand{\R}{{\mathbb R}}
\newcommand\Op{\operatorname{Op}}
\newtheorem*{main-theorem}{Main Theorem}
\newtheorem{proposition}{Proposition}[section]
\newtheorem{theorem}{Theorem}
\newtheorem*{old-thm}{Theorem}
\newtheorem{lemma}[proposition]{Lemma}
\newtheorem{corollary}[proposition]{Corollary}
\theoremstyle{definition}
\newtheorem{definition}[proposition]{Definition}
\newtheorem*{remark}{Remark}
\numberwithin{equation}{section}
\def\11{\mathds{1}}
\def\reals{{\mathbb R}}
\def\supp{\mathrm{supp}\,}
\def\O{{\mathcal O}}
\def\Op{\mathrm{Op}\,}
\def\phi{\varphi}
\def\dist{\text{dist}\,}
\def\be{\begin{eqnarray*}}
\def\ee{\end{eqnarray*}}
\def\ben{\begin{eqnarray}}
\def\een{\end{eqnarray}}
\def\lll{\left\langle}
\def\rrr{\right\rangle}
\def\L2R{L_{\text{Rest}}^2}
\def\tchi{\tilde{\chi}}
\def\dist{\mathrm{dist}}
\def\skip#1{}
\begin{document}

  
  \title[Exterior mass estimates  and Neumann data]{Exterior mass estimates  and  $L^2$ restriction bounds
  for Neumann data along hypersurfaces}

\author{Hans Christianson}
\address{Department of Mathematics, UNC-Chapel Hill \\ CB\#3250
  Phillips Hall \\ Chapel Hill, NC 27599}
  \email{hans@math.unc.edu}
 \author{Andrew Hassell}
 \address{Mathematical Sciences Institute, Australian National University \\ Canberra 0200 AUSTRALIA}
 \email{Andrew.Hassell@anu.edu.au}
\author{John A. Toth}
\address{Department of Mathematics, McGill University\\ Montreal, CANADA}
\email{jtoth@math.mcgill.ca}

\begin{abstract}
We study the problem of estimating the $L^2$ norm of  Laplace eigenfunctions on a compact Riemannian manifold $M$ when restricted to a hypersurface $H$. 
We  prove  mass estimates for the restrictions of eigenfunctions $\phi_h$, $(h^2 \Delta - 1)\phi_h = 0$,  to $H$ in the region exterior to the coball bundle of $H$,   on $h^{\delta}$-scales ($0\leq \delta < 2/3$).  We use
this estimate to obtain an $O(1)$ $L^2$-restriction bound for the
Neumann data along $H.$  The estimate also applies to eigenfunctions
of semiclassical Schr\"odinger operators.
\end{abstract}
\maketitle


\section{Introduction}
\label{introduction}

 We consider here the
eigenvalue problem on a compact Riemannian manifold $(M,g)$ with or without boundary,
with either Dirichlet or Neumann boundary conditions if $\partial M
\neq \emptyset$.  That is, we consider 
$$\left\{ \begin{array}{l}  -\Delta_g \phi_j = \lambda_j^2 \phi_j,
    \;\;\; \text{on } M, \\ \langle \phi_j, \phi_k \rangle = \delta_{jk} \\
\\
B \phi_j = 0 \;\; \mbox{on}\;\; \partial M. \end{array} \right.$$
Here, $\Delta_g$ is the negative Laplacian associated with the metric $g$, $\langle f, g \rangle = \int_M f \bar{g} dV$ is the $L^2(M)$
inner product with respect to the induced Riemannian volume form $dV$,
and where $B$ is the boundary operator, 
either $B \phi = \phi|_{\partial M}$ in the Dirichlet case or $B
\phi = \partial_{\nu} \phi|_{\partial M}$ in the Neumann case.

 We introduce a hypersurface  $H \subset M$, which we assume to  be
 orientable,
embedded, and  separating in the sense that
$$M \backslash H = M_+ \cup M_-
$$
where $M_{\pm}$ are domains with boundary in $M$.
 This is not a  restrictive assumption since our argument is local, and every hypersurface is locally separating. 
 
 \skip{This note is concerned with small-scale (2-microlocal) control estimates for the eigenfunction Cauchy data along $H,$ where  
$$CD(\lambda):= (\phi_{\lambda}|_H, \lambda^{-1} \partial_{\nu} \phi_{\lambda}|_H).$$
Specifically, we present  two independent but closely-related results here.}


Our main result deals with $L^2$-restriction bounds for the normalized Neumann data $ \lambda^{-1} \partial_{\nu} \phi_\lambda |_H.$ 



\begin{theorem} \label{main}
Suppose $H \subset M$ is a smooth, 
embedded orientable separating hypersurface   and assume that $H \cap
\partial M = \emptyset$ if $\partial M \neq \emptyset$. Let $\{ \phi_{\lambda_j} \}_{j=1}^{\infty}$ denote the $L^2$-orthonormalized Laplace
eigenfunctions on $M.$ Then,
$$ \| \lambda_j^{-1} \partial_{\nu} \phi_{\lambda_j} \|_{L^2(H)} = {\O}(1).$$
\end{theorem}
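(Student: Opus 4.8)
The plan is to reduce the global $L^2(H)$ bound to a microlocal analysis near the coball bundle $B^*H$, using the exterior mass estimate advertised in the abstract together with a standard Rellich-type identity for the interior contribution. First I would split the normalized Neumann data $u_h := \lambda^{-1}\partial_\nu\phi_\lambda|_H$ (writing $h = \lambda^{-1}$) into three pieces by inserting semiclassical pseudodifferential cutoffs on $H$: a piece $\chi_{\mathrm{ext}}$ microlocalized to $|\eta|_g \geq 1 + h^\delta$ (exterior to the coball bundle, with $\delta$ chosen in $[0,2/3)$), a piece $\chi_{\mathrm{int}}$ microlocalized to $|\eta|_g \leq 1 - h^\delta$ (interior), and a transitional piece $\chi_{\mathrm{tr}}$ supported in the shell $\big||\eta|_g - 1\big| \lesssim h^\delta$. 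For $\chi_{\mathrm{ext}} u_h$, I would invoke the exterior mass estimate: on the exterior region the eigenfunction restricted to $H$ is (semiclassically) negligible, and the normal-derivative version of that statement should give $\|\chi_{\mathrm{ext}} u_h\|_{L^2(H)} = O(h^\infty)$, or at worst $O(1)$ — in any case controlled. This is the place where the main theorem borrows the paper's central estimate.

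For the interior piece $\chi_{\mathrm{int}} u_h$, the governing tool is a Rellich/Green's identity. Choosing a vector field $V$ on $M$ transverse to $H$ with $V|_H = \partial_\nu$, pairing $(h^2\Delta - 1)\phi_h = 0$ against $V\phi_h$ and integrating by parts over $M_+$ (and $M_-$), one obtains an identity expressing a weighted integral over $H$ of $|\partial_\nu\phi_h|^2$ and $|\nabla_H \phi_h|^2$ against a bulk term of size $O(\|\phi_h\|_{L^2(M)}^2) = O(1)$. In the interior regime $|\eta|_g \leq 1 - h^\delta$ the tangential symbol $|\eta|_g^2 - 1$ is elliptic (bounded away from zero by $\gtrsim h^\delta$, but more importantly bounded below by a fixed negative constant away from the shell), so the Rellich identity can be solved for $\|\chi_{\mathrm{int}} u_h\|_{L^2(H)}^2$ in terms of the $O(1)$ bulk term plus the tangential Dirichlet data $\chi_{\mathrm{int}}\phi_h|_H$, which is itself $O(1)$ by the Burq–Gérard–Tzvetkov / Tataru restriction bounds (indeed $O(1)$ with room to spare in the interior region). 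This yields $\|\chi_{\mathrm{int}} u_h\|_{L^2(H)} = O(1)$.

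The transitional piece $\chi_{\mathrm{tr}} u_h$ is the delicate one, and I expect it to be the main obstacle. Here the symbol $|\eta|_g^2 - 1$ is no longer elliptic, so the naive Rellich argument loses; but the microsupport has measure $O(h^\delta)$ in the fiber variable, and one expects a gain from the small-scale localization. The strategy is to combine a second-microlocal (two-microlocal) decomposition along the glancing hypersurface $\{|\eta|_g = 1\}$ with the semiclassical restriction estimates of Tataru and of Burq–Gérard–Tzvetkov: the tangential data $\chi_{\mathrm{tr}}\phi_h|_H$ satisfies an $L^2(H)$ bound of size $O(h^{(\delta-1/?)})$ type coming from the fact that $\phi_h$ cannot concentrate too strongly on a thin conic shell, and the normal derivative in this regime is comparable in size to a fixed power of $h$ times tangential derivatives, so that the $O(1)$ target survives provided $\delta < 2/3$. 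Making this precise requires the quantitative control of the transition region that the exterior mass estimate on $h^\delta$-scales (for $0 \le \delta < 2/3$) is designed to provide; this is exactly why the theorem's proof is tied to that estimate and why the range of $\delta$ matters. Assembling the three pieces by the triangle inequality then gives $\|u_h\|_{L^2(H)} = O(1)$, which is the claim.
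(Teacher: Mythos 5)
Your proposal and the paper share the same raw ingredients (a Rellich identity, the $h^\delta$-scale three-way microlocal decomposition, the exterior mass estimate, the Burq--G\'erard--Tzvetkov bound), but they are assembled in a structurally different way, and the way you assemble them leaves a real gap at the glancing piece. The paper does \emph{not} decompose the Neumann data. It applies the Rellich commutator identity once, globally, obtaining
\[
\int_H (1 + h^2 \Delta_H)\, \phi_h^H \,\overline{\phi_h^H}\, d\sigma_H \;+\; \int_H |\phi_h^{H,\nu}|^2\, d\sigma_H \;=\; O(1),
\]
after replacing $(hD_n)^2\phi_h|_H$ by $(1+h^2\Delta_H)\phi_h^H$ via the eigenfunction equation. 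The whole proof then reduces to showing that the first integral is bounded \emph{below} by $-o(1)$; only then can one read off the $O(1)$ bound for the Neumann term. That lower bound is where the three-way decomposition enters, but applied to the \emph{Dirichlet} data $\phi_h^H$ inside the factor $(1+h^2\Delta_H)$, not to $u_h = \phi_h^{H,\nu}$. The key structural observation you do not have is the sign: the symbol $1 - R(x',0,\xi')$ of $1+h^2\Delta_H$ is nonnegative precisely on the coball bundle where $\phi_h^H$ concentrates, so the interior contribution is a (G{\aa}rding-)positive term, the tangential contribution is $O(h^\delta)\|\phi_h^H\|^2 = O(h^{\delta - 1/2})$ by BGT, and the exterior contribution is $O(h^\infty)$ by Proposition~\ref{prop:mass-decay}. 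Choosing $1/2 < \delta < 2/3$ closes the argument. Your plan has no substitute for this sign observation.

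Concretely, the gap in your version is the transitional piece $\chi_{\mathrm{tr}} u_h$. Decomposing the Neumann data directly and trying to bound each piece in $L^2(H)$ is exactly the approach that fails at glancing: there the normal frequency $\xi_n = \pm\sqrt{1-|\eta'|^2}$ is small but not in any way that you can turn into a usable $L^2$ bound without knowing something about concentration, and your claim that ``the normal derivative in this regime is comparable to a fixed power of $h$ times tangential derivatives'' is a heuristic, not an estimate. Two further issues: (i) the exterior mass estimate of the paper controls the Dirichlet trace $\phi_h^H$, not $u_h$; a Neumann analogue is plausible but would need its own proof, and in any case the paper does not need it; and (ii) localizing the Rellich identity to the interior microlocal region, as you propose, introduces commutator errors with the cutoff that are not obviously controlled, which is precisely why the paper puts the cutoff on the Dirichlet data inside the $(1+h^2\Delta_H)$ term rather than on $u_h$. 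So while your outline points at the right tools, it is missing the central mechanism (positivity of $1+h^2\Delta_H$ on $B^*H$) and leaves the hardest piece unresolved.
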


Theorem \ref{main} generalizes a classical result for boundary traces of Dirichlet eigenfunctions  to {\em arbitrary} interior hypersurfaces (see e.g. Hassell and
Tao \cite{HT}). We note 
that the universal $L^2$-restriction upper bound in Theorem
\ref{main} for the normalized Neumann data $\lambda^{-1} \partial_{\nu} \phi_{\lambda} 
|_{H}$ is sharp (see Section \ref{S:sharp}) and is substantially better than for the corresponding Dirichlet
data $ \phi_{\lambda} |_{H}$ which, by \cite{BGT}, is only ${\mathcal
  O}(\lambda^{\frac{1}{4}}).$ The latter estimate is also sharp.

We actually prove a more general result here that applies to eigenfunctions of general semiclassical Schr\"odinger operators with Theorem \ref{main} a special case. 

\begin{theorem}\label{main2}
Let $P(h)$ denote the Schr\"odinger operator $P(h) =-h^2 \Delta_g + V(x)$ with $V \in C^{\infty}(M;\R)$. Let $E$ be a regular value of the symbol $p(x,\xi) = |\xi|_g^2 + V(x),$
 and let $\phi_{h} \in C^{\infty}(M,g)$ be a sequence of $L^2$-normalized eigenfunctions of  $P(h)$ with eigenvalues $E(h)$ in the interval  $[E-C h, E+ Ch]$.  Then, provided $H$ is an oriented separating hypersurface with $V(x) < E$ for $x \in H$, we have  
$$ \| h \partial_{\nu} \phi_h \|_{L^2(H)} = {\O}(1)$$
as $h \to 0.$
\end{theorem}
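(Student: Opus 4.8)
The plan is to localize $\phi_h$ near $H$ using Fermi normal coordinates $(x_1, x')$ with $H = \{x_1 = 0\}$, and to study the Neumann data $h\partial_\nu\phi_h|_H = h\partial_{x_1}\phi_h|_{x_1=0}$ by microlocalizing in the cotangent direction $\xi'$ dual to $H$. The key dichotomy is between the \emph{elliptic} region $|\xi'|_{g,x'}^2 > E - V(x')$ (where the tangential momentum exceeds what the operator allows, i.e. outside the coball bundle of $H$) and the \emph{hyperbolic/glancing} region $|\xi'|_{g,x'}^2 \le E - V(x')$. In the hyperbolic region one expects the Neumann data to be $O(1)$ in $L^2(H)$ by a commutator/Rellich-type argument: pairing $[P(h), \chi(x_1)\partial_{x_1}]$ (or a suitable pseudodifferential multiplier quantizing a vector field transverse to $H$) against $\phi_h$ and integrating by parts produces a boundary term proportional to $\|h\partial_\nu\phi_h\|_{L^2(H)}^2$ plus a bulk term controlled by $\|\phi_h\|_{L^2(M)} = 1$ and the eigenvalue equation. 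This is the standard mechanism behind the Hassell–Tao boundary estimate \cite{HT}, adapted to an interior hypersurface via the separating hypothesis $M \setminus H = M_+ \cup M_-$.

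The genuinely new input — and this is where the mass estimates advertised in the abstract enter — is the \emph{elliptic} region exterior to the coball bundle. There one must show that the portion of the Neumann data with tangential frequency $|\xi'| \gtrsim 1 + c$ (above the classically allowed threshold) contributes only $O(1)$, indeed $o(1)$, to the $L^2(H)$ norm. The mechanism is semiclassical ellipticity: on $h^\delta$-scales with $0 \le \delta < 2/3$, the operator $P(h) - E(h)$ is elliptic in this region, so $\phi_h$ is concentrated there only up to rapidly decaying errors, and the same holds for its restriction and normal derivative. Concretely, I would take a pseudodifferential cutoff $\Op(a)$ with $a$ supported where $|\xi'|^2 \ge E - V + \langle\xi'\rangle h^{2\delta'}$ for appropriate $\delta'$, write $\Op(a)\phi_h = \Op(a)(P(h)-E(h))^{-1}\cdots$ using the elliptic parametrix, and trace through how much $h\partial_{x_1}$ of this is supported on $x_1 = 0$. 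The summation over dyadic frequency scales from $1$ up to $h^{-2/3}$ (this being the borderline where the parametrix construction breaks down because the symbol's derivatives start to blow up) must be handled so that the total is bounded; this is exactly the restriction on $\delta < 2/3$.

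In more detail, the steps I would carry out are: (1) reduce to a neighborhood of $H$ and set up Fermi coordinates, reducing $P(h)$ to $ -(h\partial_{x_1})^2 + R(x_1, x', hD_{x'}) + \text{(lower order)}$; (2) decompose $\phi_h|_{\text{neigh}(H)}$ dyadically in the tangential frequency variable $|\xi'|$; (3) in the region $|\xi'| \le C$, use the commutator identity with the transversal vector field to bound the Neumann data by the $L^2(M)$ mass, giving the $O(1)$ bound on the bulk of the data; (4) in each dyadic shell $|\xi'| \sim 2^k$ with $2^k$ between $C$ and $h^{-2/3}$, apply the elliptic estimate / exterior mass estimate (the main theorem of the paper) to control $\|h\partial_\nu \Op(a_k)\phi_h\|_{L^2(H)}$, obtaining a bound that is summable over $k$; (5) assemble the pieces and use $\|\phi_h\|_{L^2} = 1$. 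The hypothesis $V(x) < E$ on $H$ is used in step (3)–(4) to guarantee that the glancing set is compact in $\xi'$ and bounded away from the elliptic region uniformly along $H$, so the dyadic cutoffs are well-defined.

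The main obstacle I anticipate is step (4): making the elliptic/exterior-mass estimate quantitative enough, with explicit dependence on the dyadic scale $2^k$ and on $h$, so that the sum over the logarithmically-many shells $C \le 2^k \lesssim h^{-2/3}$ converges with an $h$-independent bound. One must be careful that the parametrix errors, while $O(h^\infty)$ pointwise in a fixed shell, do not accumulate a power of $h^{-1}$ from the number of shells or from the $h\partial_\nu$ derivative landing on a boundary trace (where one loses a half-power via a trace estimate). Controlling this interplay — the trade-off between the gain from ellipticity (stronger as $2^k$ grows) and the loss from the trace restriction to $\{x_1 = 0\}$ — is precisely what forces the threshold $2/3$ and is the technical heart of the argument; everything else is either standard microlocal analysis or the Rellich commutator trick.
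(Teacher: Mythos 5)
Your high-level architecture — a Rellich commutator identity controlling the Neumann data plus a bulk term, combined with an exterior-mass estimate on $h^\delta$-scales near the coball bundle of $H$ — is exactly the structure of the paper's proof. But several of the mechanisms you propose for the pieces are not the right ones, and one of them (the key one) would fail.

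\emph{The exterior mass estimate is not an elliptic-parametrix estimate.} You propose to treat the region $|\xi'|^2 > E - V + ch^\delta$ by constructing a parametrix for $P(h)-E(h)$ and arguing that the symbol's derivative blowup at $h^{-2/3}$ is the obstruction. This cannot work, because on that set the ambient symbol $\xi_n^2 + |\xi'|^2 + V - E$ is only bounded below by $O(h^\delta)$ for $\xi_n$ near $0$ — it is not elliptic, even marginally, in a way that a standard or $2$-microlocal parametrix can exploit to get $O(h^\infty)$ decay of the \emph{restriction}. (Indeed the $2$-microlocal calculus has no intrinsic breakdown at $h^{2/3}$; it works up to $\delta<1$.) What the paper does instead (Proposition~\ref{prop:mass-decay}, adapted in Lemma~\ref{action}) is to write $\phi_h^H$ as an oscillatory integral using the approximate spectral projector — Sogge's $\chi_\lambda$ in the Laplacian case, and in the Schr\"odinger case the operator $\chi((P(h)-E(h))/2h)$ built from the semiclassical propagator, whose kernel has the action $A(x,y)$ as phase — and then integrate by parts in the phase. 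The $2/3$ threshold is a \emph{geometric} effect: the integration by parts deteriorates exactly where the geodesic from $x$ to $y'\in H$ is nearly tangent to $H$ (the factor $\cos\theta$, respectively $|d_{y_n}A|$, is small), and balancing the cases $\cos\theta\lessgtr h^{\delta/2}$ gives a gain of $h^{1-3\delta/2}$ per integration by parts. That this is a curvature effect, not a symbol-class effect, is confirmed by the paper's observation that for totally geodesic $H$ one obtains $\delta<1$. Your attribution of $2/3$ to a parametrix threshold misses this entirely.

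\emph{The lower bound $\delta > 1/2$ is missing, and your plan doesn't yet close.} The Rellich identity controls $\int_H((hD_n)^2\phi_h)|_H\,\overline{\phi_h^H}+\|\phi_h^{H,\nu}\|^2$, and the first integrand is $(E-V+h^2\Delta_H)\phi_h^H$ up to lower order. After inserting the $\chi_{in}+\chi_{tan}+\chi_{out}$ partition, the tangential piece is only bounded crudely by $C h^\delta\|\phi_h^H\|^2$, and $\|\phi_h^H\|^2$ is only $O(h^{-1/2})$ by Burq--G\'erard--Tzvetkov. You therefore need $\delta>1/2$ to make this term $o(1)$, and simultaneously $\delta<2/3$ so the exterior piece is $O(h^\infty)$. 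You never identify the lower constraint, and without it the argument does not close even granted your elliptic estimate. Relatedly, your dyadic decomposition over shells $|\xi'|\sim 2^k$ is both unnecessary and a source of the summation worries you raise; the paper uses a single fixed scale $h^\delta$ for one $\delta\in(1/2,2/3)$ and there is nothing to sum.

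\emph{The Schr\"odinger-specific content is unaddressed.} Passing from Theorem~\ref{main} to Theorem~\ref{main2} requires replacing Sogge's operator with $\chi((P(h)-E(h))/2h)$ and verifying via the semiclassical propagator that it has a WKB kernel with phase equal to the Hamilton--Jacobi action $A(x,y)$ of $L=|\xi|^2_g-V$ along bicharacteristics (Lemma~\ref{action}), with the nondegeneracy $\partial^2_{tt}\Phi\neq 0$ following from $V<E$ on $H$ (so $|\partial_t X|=\sqrt{E-V}$ is bounded away from zero). Your use of the hypothesis $V<E$ (to keep the coball bundle uniformly fibered) is a much weaker observation and does not supply this construction.
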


For simplicity, we give the proof of Theorem \ref{main} first and then outline the fairly minor changes for Schr\"odinger eigenfunctions in section \ref{schroedinger}.

We should point out that although not explicitly stated in Tataru's paper, the  result for Laplace eigenfunctions in Theorem \ref{main} actually follows from \cite{Ta} Theorem 2 if one consider  wave functions of the form $u(x,t) = e^{i\lambda t} \phi_{\lambda}(x).$ Then, the regularity bounds for $u(x,t)$ in \cite{Ta} readily yield the semiclassical estimates for the eigenfunctions $\phi_{\lambda}(x)$ in our Theorem \ref{main}.  However, our proof of Theorem \ref{main} here is quite different  and our general result in Theorem \ref{main2} is apparently new.
We remark also that we have been informed that Melissa Tacy has also
obtained an independent 
proof of Theorem \ref{main} using different methods in a current work in progress. 

Theorems \ref{main} and \ref{main2} follow  from a Rellich commutator argument together with 
a `small-scale' estimate for the mass of an eigenfunction in the region exterior to the coball bundle on $H$ (Proposition~\ref{prop:mass-decay}). Here the significance of the coball bundle is that it is the projection of the characteristic variety of the semiclassical Laplacian $-h^2 \Delta-1$ to $T^* H$, hence the region of phase phase where the eigenfunctions are expected to concentrate. By `small-scale' we mean we localize outside a neighbourhood of the coball bundle of size $h^\delta$, where $\delta$ is allowed to be larger than $1/2$; in fact, we find that one can let $\delta$ be as large as $2/3 - \epsilon$: we show that outside a neighbourhood of this size, the mass is $O(h^\infty)$. Also, we observe that the exponent $2/3$ is optimal; in fact, the mass outside an $h^{2/3}$-sized neighbourhood of the coball bundle can be as large as $h^{1/6}$ as we show with a simple example.

In the following we let the semiclassical parameter $h
\in \{\lambda_j^{-1} \}_{j=1}^\infty$ and rescale the Laplacian to the semiclassical operator $P(h) = -h^2 \Delta_g-1.$ We abuse notation somewhat and write $\phi_{h} = \phi_{\lambda}$ for the eigenfunction with eigenvalue $\lambda^2 = h^{-2}.$
 To state the relevant commutator estimate, we introduce various $h$-pseudodifferential cutoffs suppressing for the moment some of the technical details. Let $\chi \in C^{\infty}_{0}(\R;[0,1])$ with $\chi(u) = 1 $ for $|u| \leq 1/2$ and $\chi(u) = 0$ for $|u| >1,$   $\chi_{-} \in C^{\infty}(\R) $ with $\chi_{-}(u) = 1 $ when $u < -1$ and $\chi_{+} \in C^{\infty}(\R)$ with $\chi_{+}(u) =1$ when $u > 1.$ In addition we require that 
$$ \chi_{-}(u) + \chi(u) + \chi_{+}(u) =1; \,\,u \in \R.$$ 
  Let $R(x',\xi') = \sigma (-h^2\Delta_H)(x',\xi')$ be the principal symbol of the induced hypersurface Laplacian $-h^2\Delta_H: C^{\infty}(H) \rightarrow C^{\infty}(H)$ and consider the decomposition of  $T^*H$ into 3 pieces given by the   radial cutoffs $\chi_{in}, \chi_{tan},\chi_{out} \in C^{\infty}(\R;[0,1])$ 
 with $\chi_{in}(x',\xi') = \chi_{-}( R(x',\xi') -1 ),  \chi_{tan}(x',\xi') = \chi( R(x',\xi') -1)$ and $\chi_{out}(x',\xi') = \chi_{+}(R(x',\xi') -1)$.  Clearly, supp $\chi_{in}  \subset B^*H$, supp $\chi_{out} \in T^*H-\overline{B^*H}$ and in addition,
\begin{equation} \label{cutoff decomp}
\chi_{in}(x',\xi') + \chi_{tan}(x',\xi') + \chi_{out}(x',\xi') = 1; \,\,\, (x',\xi') \in T^*H. \end{equation}
For any $\delta \in [0,1)$ we also define the rescaled cutoff functions by $(\chi_{in})_{h,\delta} (x,\xi) = \chi_{-} ( h^{-\delta}( R(x',\xi') -1) ),  (\chi_{tan})_{h,\delta} (x,\xi) = \chi (h^{-\delta}( R(x',\xi') - 1) ),$ and  $(\chi_{out})_{h,\delta} (x,\xi) = \chi_{+} (h^{-\delta}( R(x',\xi') - 1) ).$

We denote the corresponding $h$-Weyl pseudodifferential partition of
unity by $(\chi_{tan})_{h,\delta}^w = Op_h^w ( (\chi_{tan})_{h,\delta}
)$ and similarly for $(\chi_{out})_{h,\delta}^w$ and
$(\chi_{in})_{h,\delta}^{w}$ (see Figure \ref{fig:cutoffs}). In the following, we denote the canonical restriction map by $\gamma_H: C^{\infty}(M) \rightarrow C^{\infty}(H)$ and the corresponding eigenfunction restriction by $\phi_h^{H}:= \gamma_H \phi_h \in C^{\infty}(H).$

\begin{figure}
\hfill
\centerline{\input{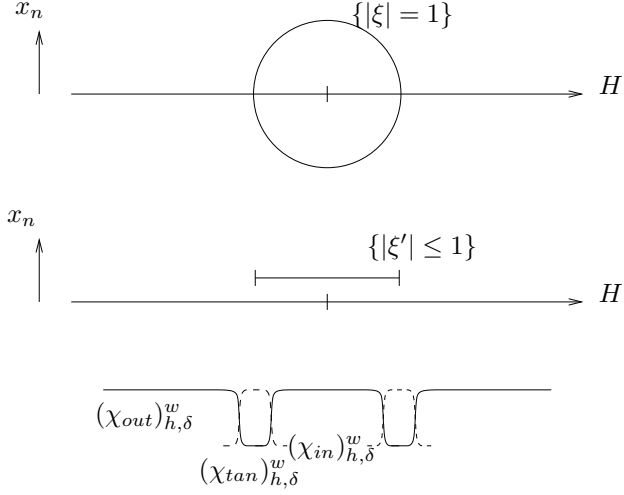}}

\caption{\label{fig:cutoffs} The hypersurface $H$ in Fermi normal
  coordinates with the restricted cosphere bundle.  In the second
  picture we have projected onto the coball bundle of $H$ and sketched
  the microlocal partition of unity.  The cutoff
  $(\chi_{in})_{h,\delta}^{w}$ microlocalizes to the interior of the
  coball, on scale $h^\delta$ from the edge,
  $(\chi_{out})_{h,\delta}^{w}$ microlocalizes to the exterior on the
  same scale, and $(\chi_{tan})_{h,\delta}^{w}$ (in dashed)
  microlocalizes to the edge of this set, where $| \xi' | \sim 1$ on
  scale $h^\delta$.  This is the ``glancing'' set, since these
  directions have little or no normal direction $\xi_n$ in the lifted 
  cosphere bundle.}
\hfill
\end{figure}

Let us explain how our argument begins. We use a Rellich identity, involving the commutator of $-h^2 \Delta - 1$ with the operator 
$\chi(x_n) hD_n$. Integrating over $M_-$, we  have, using Green's formula, 
\begin{align*}
\frac{i}{h}  \int_{M_-} [-h^2 \Delta -1, &\chi(x_n) hD_n] \phi_h \overline{\phi_h}
dx \\
 = \int_H ((hD_n)^2  \phi_h)|_H \overline{\phi_h}|_H d \sigma_H 
& + \int_H (h D_n \phi_h)|_H \overline{hD_n \phi_h}|_H d \sigma_H .
\end{align*}
The LHS is $O(1)$ as it involves  the expectation value of $\phi_h$ with a second order differential operator. The second term of the RHS is exactly the quantity we seek to bound. Thus we need to understand the first term on the RHS. Since $\phi_h$ is an eigenfunction, $(h D_n)^2 \phi_h$ is equal to $(1 + h^2 \Delta_H) \phi_h$ up to an error term $h^2 L \phi_h^H$ where $L$ is a first order differential operator. We can ignore the $h^2 L$ term using the Burq et al $O(h^{-1/4})$ bound for $\phi_h^H$. The main observation that makes the proof work is that $\phi_h^H$ is semiclassically localized inside the coball bundle of $H$, and on this set, $1 + h^2 \Delta_H$ has nonnegative symbol. Hence the first term on the LHS is morally a positive term, that is, of the same sign as the second term. This leads immediately to the $O(1)$ estimate we seek. Thus the main work in the proof is justifying that the $(1 + h^2 \Delta_H) \phi_h^H$ term is indeed positive up to an $O(1)$ error. To do this, we break up this term using the partition of the identity operator given by $(\chi_{in})_{h,\delta}^w ,  (\chi_{tan})_{h,\delta}^w$ and $(\chi_{out})_{h,\delta}^w$ defined above, for $\delta > 1/2$, and analyze each separately.

\section{Second microlocalization at a hypersurface}

In the proof of Theorem \ref{main} we will use  $h$-pseudodifferential operators second microlocalized (2-microlocalized) along the hypersurface $H.$  For the convenience of the reader, we collect and briefly review here the requisite semiclassical
analysis including the  various 2-microlocal symbol classes and the corresponding pseudodifferential operator calculus. The material here is a special case of a more general two-parameter calculus developed in \cite{SjZw1,SjZw2}.  Since our interest lies in establishing $L^2$-restriction bounds for eigenfunctions along a hypersurface $H \subset M,$ we need only consider ambient symbols supported in an $\epsilon >0$ neighbourhood of $H$, where $\epsilon >0$ is arbitrary small. Thus, we introduce Fermi coordinates $x = (x',x_n)$ near $H$ with $ x= \exp_{x'}(x_n \nu_{x'})$ with $\nu_{x'}$ an exterior unit normal to $H$. Since by assumption $H$ is orientable, this is well-defined. In Fermi coordinates,  $H = \{ x_n = 0 \}$  and it is convenient to define our symbols in terms of these coordinates. We do so without further comment.

\subsection{ Homogeneous and semiclassical symbol classes} We collect
for  future reference a brief review of  the standard symbol classes
and corresponding pseudodifferential operators used later on (see also
\cite[Section 4.4]{Zw-book}). The more subtle 2 microlocal semiclassical analysis is treated in \ref{2microlocal}.

The standard homogeneous symbol spaces that are relevant here are
\begin{align} 
S^{m}_{\rho,\delta} & (T^*M) \notag \\
& = \{ a(x,\xi) \in C^{\infty}(T^*M-0);
|\partial_{x}^{\alpha} \partial_{\xi}^{\beta} a(x,\xi)| = {\mathcal
  O}_{\alpha,\beta}( \langle \xi \rangle^{m-\rho|\alpha| + \delta
  |\beta|}), \, \rho > \delta \} \label{symbols1}
\end{align}
As for the semiclassical symbols,   the relevant symbol classes for our purposes are
\begin{align} \label{symbols2} 
S^{m,k}_{cl} & (T^*M \times (0,h_0])  
 = \{ a(x,\xi;h) \in C^{\infty}(T^*M \times (0,h_0]); \\
& a(x,\xi;h)
\sim \sum_{j=0}^{\infty} a_{m-j}(x,\xi) h^{-m+j} , \,\, a_{m-j} \in
S^{k}_{0,0}\}, \notag \\  
S^{m}_{\delta} & (T^*M \times (0,h_0])\\
& = \{ a(x,\xi;h) \in C^{\infty}(T^*M \times (0,h_0]; |\partial_{x}^{\alpha} \partial_{\xi}^{\beta} a| = {\mathcal O}_{\alpha,\beta}( h^{-m}   h^{- \delta(|\alpha| + |\beta|)} \langle \xi \rangle^{-\infty} ) \}, \notag
\end{align}
with $\delta \in [0,1).$
Since both the eigenfunctions $\phi_h$ and their restrictions $u_h =
\phi_h|H$ have compact $h$-wavefront sets (see for example
\cite[Section 8.4]{Zw-book} and
Section \ref{S:loc} below) we are interested here in only the case
where the $\xi$ variables are in a compact set. Consequently, the semiclassical symbol classes $S^{m}_{\delta}$ are most relevant. In the special case where $\delta =0,$ 
$$ S^{m}_{0}(T^*M \times (0,h_0]) = \{ a \in C^{\infty}(T^ *M \times (0,h_0]); | \partial_x^{\alpha} \partial_{\xi}^{\beta} a| = {\mathcal O}_{\alpha,\beta} (h^{-m} \langle \xi \rangle^{-\infty}) \}.$$
When the context is clear, we sometimes just write $S^{m}_{\delta}$
instead of $S^{m}_{\delta}(T^*M \times (0,h_0]).$ The case where
$\delta = 0$ is sometimes denoted by $S^{m}(1)$ in the literature.

The corresponding $h$-Weyl pseudodifferential operators have Schwartz kernels that are sums of the local integrals of the form
\begin{equation} \label{psdo}
Op_{h}^{w}(a)(x,y)  = (2\pi h)^{-n} \int_{\R^n} e^{i \langle x-y,\xi \rangle/h } a( \frac{x+y}{2},\xi;h) d\xi. \end{equation}
We will use $Op_{h}^{w}(a),$ $a_{h}^w$ and $a^w(x,h D_x)$ interchangeably to denote $h$-Weyl quantizations of $a(x,\xi;h)$  since each has its advantages. It is standard that for $a \in S^{m_1,k_1}_{cl}, b \in S^{m_2,k_2}_{cl},$
$$ a^w(x,h D_x) \circ b^w(x,h D_x) = e^{i h \sigma(D_x,D_\xi,D_y,D_\eta)/2} a(x,\xi) b(y,\eta) |_{y=x,\eta=\xi}$$
$$= c^w(x,h D_x) \in Op_{h}^w(S^{m_1+m_2,k_1 + k_2}_{cl}(T^*M) )$$ with
$ c(x,\xi;h) = a(x,\xi;h) \# b(x,\xi;h)$ and $\sigma(x,\xi,y,\eta) = y \xi - x \eta.$ 
Similarily, for  $a \in S^{m_1}_{\delta}, b \in S^{m_2}_{\delta}$ with $\delta \in [0,1/2),$ 
$$ a^w(x,h D_x) \circ b^w(x,h D_x) = c^w(x,h D_x) \in Op_{h}^w(S^{m_1+m_2}_{\delta}(T^*M) )$$ with $ c(x,\xi;h) = a(x,\xi;h) \# b(x,\xi;h).$  

Since eigenfunctions (and their restrictions) have compact
$h$-wavefront, it is the algebra $Op_{h}(S^*_{\delta})$ that is most
relevant here.  We point out that for $a^w(x,h D_x) \in
Op_{h}(S^0_{\delta})$, $0 \leq \delta < 1/2$, with $a(x,\xi;h) \geq
0,$ one also has the sharp G{\aa}rding inequality $a^{w}(x,h D_{x}) \geq -
C h^{1 - 2 \delta}$ (in the $L^2$ sense) and indeed the sharper Fefferman-Phong inequality 
\begin{equation} \label{garding}
a^{w}(x,h D_{x}) \geq - C h^{2- 4 \delta} \end{equation}
also holds \cite[Section 4.7]{Zw-book}.

\subsection{Semiclassical second-microlocal pseudodifferential cutoffs: microlocal decompostion}
\subsubsection{Fermi normal coordinates near $H$}  \label{fermi} Frow now on, we let $x = (x',x_n) $ be Fermi normal coordinates in a small
tubular neighbourhood $H(\epsilon)$ of $H $ defined near a point
$x_0 \in H$. In these coordinates we can locally write
$$H(\epsilon) := \{ (x',x_n) \in U \times {\mathbb R}, \, | x_{n} | < \epsilon \}.$$
Here $U \subset {\mathbb R}^{n-1}$ is a coordinate chart
containing $x_0 \in H$ and $\epsilon >0$ is arbitrarily small but
for the moment, fixed. We let $\chi \in C^{\infty}_{0}({\mathbb
R})$ be a cutoff with $\chi(x) = 0$ for $|x| \geq 1$ and $\chi(x)
= 1 $ for $|x| \leq 1/2.$
  Moreover, in terms of the normal coordinates,
   $$-h^2\Delta_g = \frac{1}{g(x)} hD_{x_n} g(x) hD_{x_n}   + R(x_n,x',hD_{x'})$$
where $R$ is a second-order $h$-differential operator along $H$
with coefficients that depend on $x_{n}$, and $R(0,x', hD_{x'})$
is the
induced tangential semiclassical Laplacian, $-h^2\Delta_H,$ on $H$.  Consequently, at the level of symbols,
$$ \sigma(-h^2 \Delta_{g})(x,\xi)  = |\xi|^{2}_{g} = \xi_n^2 + R(x',x_n,\xi'),$$
where,
$$ \sigma(-h^2 \Delta_H)(x',\xi') = R(x',0,\xi').$$

Let $\gamma_H: C^{0}(M)\rightarrow C^{0}(H)$ be the restriction operator $\gamma_H(f) = f|_{H}.$ The adjoint $\gamma_H^*:C^{0}(H) \rightarrow C^0(M)$ is then given by
$$ \gamma_H^* (g) = g \cdot \delta_H.$$
When there is no risk for confusion we write $\phi_{h}^{H} = \gamma_H \phi_{h}$ and similarily $\phi_{h}^{H,\nu} = -ih\gamma_H \partial_{x_n} \phi_{h}.$

We now describe the relevant 2-microlocal $h$ pseudodifferential operators that are  second microlocalized along $ \Sigma_H:=S^*H \subset T^*H.$ 

\subsubsection{ Semiclassical  pseudodifferential operators second microlocalized along $\Sigma_H$} \label{2microlocal}

We  introduce here the relevant 2-microlocal algebra of
$h$-pseudodifferential operators localized on  small scales $\sim
h^{\delta}$ where $  \delta  \in (1/2,1)$ that will be used in the proof of
Theorem \ref{main}. When $ \delta  \in
(0,1/2)$ the pseudodifferential calculus is well-known \cite[Chapter 4]{Zw-book} but for $\delta > 1/2$ the construction is more subtle. The relevant calculus has been developed in very general framework by Sj\"ostrand and Zworski \cite{SjZw1,SjZw2} to which we refer the reader for further details.  Since a rather simple special case of their calculus will suffice for our purposes,  we will attempt to  keep the argument fairly self-contained.  

\begin{definition}  \label{2micropsdos} Let $H \subset M$ be a
  hypersurface. We say that a semiclassical symbol $b$ is
  2-microlocalized along $\Sigma_H$ and write $b \in
  S^{m}_{\Sigma_H,\delta}(T^*H \times (0,h_0])$  provided there exists
  $ \chi  \in C^{\infty}_{0}(\R), a_1(x',\xi';h) \in
  S^{0}( T^*H \times (0,h_0] )$  such that
$$b(x',\xi';h) = h^{-m} a_1(x',\xi';h) \cdot \chi \left( \frac{R(x',x_n=0,\xi')-1}{h^{\delta}} \right), \,\,\, 0 \leq \delta < 1$$
for all $(x',\xi') \in T^*H.$
\end{definition}

We will need the following  proposition (see also \cite{SjZw1,SjZw2}).
\begin{proposition} \label{algebra} Given $a^w(x,h D_x) \in Op_h (S^{m_1}_{\Sigma_H,\delta})$ and $b^w(x,h D_x) \in Op_h (S^{m_2}_{\Sigma_H,\delta})$ it follows that
 $$ a^{w}(x,h D_x) \circ b^w(x,h D_x)  = c^w(x,hD_x) \in Op_h (S^{m_1 + m_2}_{\Sigma_H,\delta})$$ with
$$ c(x,\xi;h) = a(x,\xi,h) \# b(x,\xi,h).$$
\end{proposition}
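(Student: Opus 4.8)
The plan is to reduce the composition formula for operators in the $2$-microlocal class $S^m_{\Sigma_H,\delta}$ to the standard $S^m_\delta$-calculus already recalled in the excerpt. The key structural observation is that a symbol $b \in S^{m_2}_{\Sigma_H,\delta}$ has, by Definition~\ref{2micropsdos}, the factored form $b = h^{-m_2} a_1(x',\xi';h)\,\chi\!\left( (R(x',0,\xi')-1)/h^\delta \right)$, and the cutoff factor $\chi\!\left( (R-1)/h^\delta \right)$ is \emph{itself} a symbol in the ordinary class $S^0_\delta(T^*H \times (0,h_0])$: each $\xi'$- or $x'$-derivative that lands on $\chi$ produces a factor $h^{-\delta}$ times a derivative of $R$ (which is smooth with bounded derivatives on the relevant compact $\xi'$-region), and $a_1 \in S^0$, so by the Leibniz rule $b \in S^{m_2}_\delta$. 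The same is true of $a \in S^{m_1}_{\Sigma_H,\delta} \subset S^{m_1}_\delta$. Since $0 \le \delta < 1$, to invoke the composition result from the excerpt we are in the regime where $S^{m_1}_\delta \# S^{m_2}_\delta \subset S^{m_1+m_2}_\delta$ — here I would note that the excerpt states this cleanly for $\delta \in [0,1/2)$, and for $\delta \in (1/2,1)$ one appeals instead to the Sj\"ostrand--Zworski two-parameter calculus of \cite{SjZw1,SjZw2} (with second parameter $\tilde h = h^{1-\delta}$), which is exactly the point of introducing that machinery; so $c = a\#b$ exists as an asymptotic sum and lies in $S^{m_1+m_2}_\delta$. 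What remains is to verify that $c$ has the \emph{refined} $2$-microlocal structure, i.e.\ that $c$ again factors through a cutoff of the form $\chi_1\!\left((R-1)/h^\delta\right)$.

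For that refinement I would argue via the stationary-phase/asymptotic expansion of the Moyal product. Write $a = h^{-m_1} a_1 \chi_a$ and $b = h^{-m_2} a_2 \chi_b$ with $\chi_a, \chi_b$ cutoffs as above, possibly with $\supp \chi_a, \supp\chi_b$ slightly different; choose a further cutoff $\psi \in C_0^\infty(\R)$ with $\psi \equiv 1$ on a neighbourhood of $\supp\chi_a \cup \supp\chi_b$ and set $\chi_1(x',\xi') = \psi\!\left((R(x',0,\xi')-1)/h^\delta\right)$. The claim is that $c = \chi_1 \cdot c$ modulo $O(h^\infty)$ in $S^{-\infty}_\delta$, because the Moyal product is, up to $O(h^\infty)$, supported microlocally in (an $h^\delta$-dilated neighbourhood of) the intersection of the supports of the two factors — this is just the nonstationary-phase statement that $a\#b$ decays rapidly off $\supp a \cap \supp b$, rescaled to the $h^\delta$ geometry. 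Concretely, on the region where $\chi_1 = 0$ one has $\dist\big((x',\xi'), \supp\chi_a\big) \gtrsim h^\delta$ (in the $R$-variable), and each integration by parts in the oscillatory integral defining the symbol product gains a factor $h \cdot h^{-\delta} \cdot h^{-\delta} = h^{1-2\delta}$ wait — more carefully, gains $h/(h^\delta)^2 = h^{1-2\delta}$ per step, which is a genuine gain precisely when $\delta < 1/2$; for $\delta \in (1/2,1)$ one instead uses that in the Sj\"ostrand--Zworski calculus the relevant small parameter controlling this localization is $\tilde h / $ (gap size) and the gap here is order $1$ in the rescaled variable, giving $O(\tilde h^\infty) = O(h^\infty)$. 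Either way $c = h^{-(m_1+m_2)}\tilde a\, \chi_1$ with $\tilde a \in S^0$, which is the asserted form; absorbing $\chi_1$ and $\tilde a$ into the definition gives $c^w \in Op_h(S^{m_1+m_2}_{\Sigma_H,\delta})$.

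The principal-symbol identity $c = a \# b$ is then automatic since it already holds in the ambient $S_\delta$-calculus and the $2$-microlocal class is a subclass; there is nothing further to check there. The main obstacle, and the only place real care is needed, is the case $\delta > 1/2$: the naive symbol-product integration-by-parts does \emph{not} close (the putative gain $h^{1-2\delta}$ is a loss), so one genuinely cannot work within the classical $S_\delta$-framework and must instead quote the Sj\"ostrand--Zworski second-microlocal calculus of \cite{SjZw1,SjZw2}, checking that our symbols fit their hypotheses with the auxiliary scale $\tilde h = h^{1-\delta} \to 0$ and that ``$R(x',0,\xi')-1$'' plays the role of their defining function for the second microlocalization (which, since $\Sigma_H = S^*H = \{R=1\}$ is a smooth hypersurface of $T^*H$ with $dR \ne 0$ there, it does). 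Once that identification is made, both the existence of $c = a\#b$ in the right class and the support localization are direct consequences of their calculus, and the proof is complete.
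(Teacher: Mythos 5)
Your proposal is correct in spirit but takes a genuinely different route from the paper, and it leaves the hard case of the proposition essentially unproved. You begin by observing the (true, but coarse) inclusion $S^m_{\Sigma_H,\delta} \subset S^m_\delta$; as you yourself then notice, this throws away exactly the anisotropic structure that makes the $2$-microlocal class usable for $\delta\in(1/2,1)$, and the naive Moyal integration by parts gains only $h^{1-2\delta}$, which is a loss. At that point your argument becomes "quote Sj\"ostrand--Zworski with $\tilde h=h^{1-\delta}$" for both the existence of $c=a\#b$ and the support localization, without verifying that the hypotheses are met or extracting the $\chi_1\!\left((R-1)/h^\delta\right)$ factorization from their formalism. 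That is not a proof so much as a statement that a proof exists.

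The paper's mechanism is different and self-contained. It first reduces, via the quantum normal form of Lemma~\ref{normal form} (which you do not use at all), to the model cutoff $\chi(\eta'_1/h^\delta)$ conjugated into a small chart by an $h$-FIO; it then performs the isotropic rescaling $(y',\eta')\mapsto(y'/h^{\delta/2},\eta'/h^{\delta/2})$. The crucial point of this rescaling is that it converts the model symbol $\chi(\eta'_1/h^\delta)$ into $\chi(\tilde\eta'_1/h^{\delta/2})$, i.e.\ a symbol in the ordinary class $S^0_{\delta/2}$ with $\delta/2<1/2$ for every $\delta<1$. The composition is then computed by the \emph{standard} $\#$-expansion in that calculus, where one does have the usual gain at each order, and the separable (tensor-product) structure of symbols in $S^m_{\Sigma_H,\delta}$ gives the factored form $c=h^{-(m_1+m_2)}\tilde a\,\chi_1\!\left((R-1)/h^\delta\right)$ directly, after undoing the normal form. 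So the paper never needs the full Sj\"ostrand--Zworski machinery on its own terms; the normal form plus the exponent-halving rescaling bring the problem back to the classical $\delta<1/2$ range.

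To turn your proposal into a complete argument you would either have to carry out the rescaling argument (which is what actually defines the $\#$-product in this regime), or spell out concretely how the $S^m_{\Sigma_H,\delta}$ symbols fit the Sj\"ostrand--Zworski two-parameter hypotheses with second scale $\tilde h = h^{1-\delta}$ and how the support localization follows there. Your final paragraph identifies the right hypotheses to check (that $R(x',0,\xi')-1$ is a defining function with $dR\neq 0$ on $\Sigma_H$), so the gap is one of execution rather than conception; but as written, the central case $\delta\in(1/2,1)$ is asserted, not proved.
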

\begin{proof}  Since
$$ d_{\xi'} R(x',0,\xi')  \geq C \langle \xi' \rangle, \,C>0$$
near $\{ R(x', 0, \xi') = 1 \}$, 
as in \cite{SjZw1,SjZw2}, the proof hinges on the following
real-principal type quantum normal form construction given in the
following Lemma.  
\begin{lemma} \label{normal form} Let $(x_0,\xi_0) \in \Sigma_H$ and let
  $U \subset T^*H$ be a sufficiently small open neighbourhood of $(x_0,\xi_0).$ For $U$ small, there exists $V \subset T^*\R^{n-1}$ open together with a canonical transformation
$$\kappa_F: (U, (x_0',\xi_0')) \longrightarrow (V; (0,0)); \,\,\, \kappa_F(x',\xi') = (y',\eta') $$ and corresponding $h$-Fourier integral operators $F(h):C^{\infty}_{0}(U) \rightarrow C^{\infty}_{0}(V)$ such that
$$ (i) \,\, F(h)^{*} \circ  \chi^w_{h} \left( \frac{ R(x',0,\xi') - 1}{h^{\delta}} \right) \circ F(h) =_{U \times V} {\chi}^w_h \left( \frac{\eta_1'}{h^{\delta} }; h \right),$$
$$ (ii) \,\,\, F(h)^* \circ F(h)=_{U \times V} Id,$$
with ${\chi} \in C^{\infty}_{0}(\R^{n-1};(0,0) \times (0,h_0] )$.   Here, $A(h)=_{U \times V} B(h)$ denotes $h$-microlocal equivalence on $U \times V \subset T^*H \times T^*\R^{n-1}.$
\end{lemma}

Given Lemma \ref{normal form} one reduces the proof of the proposition
to operators in normal form. In the conormal variables $(x_n,\xi_n)$
the composition formula is standard since symbols are  in the standard
$S^{0}_{\delta/2}$-classes with $ \delta/2 < 1/2$.  Since symbols in $S^{m}_{\Sigma_H,\delta}$ are separable, it suffices to assume that
$a(x,\xi;h) = \chi_1 ( h^{-\delta} ( R(x',0,\xi') - 1) )$ and
$b(x,\xi;h) = \chi_2( h^{-\delta} ( R(x',0,\xi') - 1) )$ with $\chi_j
\in C^{\infty}_{0}(\R); j =1,2.$  Let $\psi_{U_j}\times \psi_{V_j}
\in Op_h( S^{0}_{0}) ; j=1,...,N_0$ be an $h$-microlocal partition
of unity subordinate to a covering of the supports of $a$ and $b$ in
$T^*H$ by open sets $U_j; j=1,...,N_0$ such that on each $U_j$
Lemma \ref{normal form} holds with h-FIO $F(h)$ with $WF_h'(F(h))
\subset U_j \times V_j.$  
From Lemma \ref{normal form},
$$ a^w_h \circ b^w_h = \sum_{j=1}^{N_0} \psi_{U_j}^w F(h) \circ [ F(h)^* a^w_h F(h)] \circ [F(h)^* b_h^w F(h)] \circ F(h)^* \psi_{V_j}^w + R(h),$$
where $R(h) \in Op_h(S^{-\infty}_{0}).$ For the inner model operators
one simply rescales the fiber variables $(y',\eta'/h^{\delta}) \mapsto
(y'/h^{\delta/2}, \eta'/h^{\delta/2})$ and computes the composition in
the model normal coordinates. The result is that in each chart 
\begin{align} \label{model1} 
 [ F(h)^* & a^w_h F(h)]  \circ [F(h)^* b_h^w F(h)]  \\
& =     \left( \chi_1 (h^{-\delta} \eta_1')  \# \chi_2 (h^{-\delta} \eta_1')  \right)^{w} + {\mathcal O}(h^{\infty})_{L^2 \rightarrow L^2} \notag
   \\ &  =  (2\pi h )^{-(n-1)} \int_{\R^{n-1}} e^{i \langle
     x'-y',\eta' \rangle/ h}
   \psi_{V_j}(\frac{x'+y'}{2h^{\delta/2}},h^{\delta/2}\eta')  (\chi_1
   \# \chi_2) ( h^{-\delta/2} \eta_1')   \, d\eta' \\
& \quad \quad + {\mathcal O}(h^{\infty})_{L^2 \rightarrow L^2}.  \notag\end{align}
   The $\#$ product expansion is computed as usual in the
   $h^{-\delta/2}$ calculus and then rescaled, which is particularly
   simple for our special choice of operators:
   $$ (\chi_1 \# \chi_2) ( h^{-\delta} \eta_1')  = \sum_{j=0}^{\infty} h^{j} ( D_x D_{\eta} - D_y D_{\xi})^{j} [ \chi_1(h^{-\delta} \eta_1')  \cdot  \chi_2 ( h^{-\delta} \xi_1') ] |_{\xi' = \eta'} $$
$$ = \chi_1(h^{-\delta} \eta_1')  \cdot  \chi_2 ( h^{-\delta} \eta_1') + R(y',\eta';h),$$
where, $R(y',\eta';h) \in S^{-\infty}_{0}(T^*\R^{n-1}).$

\end{proof}

\begin{remark} We note that $L^2$ boundedness for $a^w_h \in Op_h (S^0_{\Sigma_H,\delta})$ with $\delta \in [0,1)$ is clear by passing to normal form. Since $F(h)^* F(h) \cong_{U_j \times V_j} Id,$
\begin{align} 
\label{bdd}
\| a^w(x,hD_x) \|_{L^2 \rightarrow L^2} & \leq  \sum_{j=1}^{N_0} \| \chi_h^w(h^{-\delta} \eta'_{1}) \psi_{V_j,h}^w(y',\eta') \|_{L^2 \rightarrow L^2}  \\ 
& = \sum_{j=1}^{N_0} \| \chi^w(h^{1-\delta} \eta'_{1})
\psi_{V_j}^w(y',h \eta') \|_{L^2 \rightarrow L^2} \notag \\
& \leq \sum_{| \alpha |\leq 2n +1}  \|\partial^\alpha  \chi( h^{1-\delta} \eta'_{1}) \psi_{V_j}(y',h \eta')
\|_{L^{\infty}} \notag \\
& \leq \| \chi( h^{1-\delta} \eta'_{1}) \psi_{V_j}(y',h \eta')
\|_{L^{\infty}} + {\mathcal O}(h^{1-\delta}). \notag
\end{align}
This follows by  rescaling $\eta' \mapsto h \eta'$ in the
fiber variables and then applying the Calderon-Vaillancourt theorem,
together with the trivial estimates on the derivatives of the
functions $\chi$ and $\psi_{V_j}$.

 \end{remark}

 The proof of Lemma \ref{normal form}, in particular the  local
 rescaling 
\[
(y', \eta' /h^\delta) \mapsto ( y' / h^{\delta/2}, \eta'/
 h^{\delta/2} ), 
\]
can also be used to prove a version of the
 G{\aa}rding inequalities for operators in $\Op_h (S^m_{\Sigma_H,
   \delta} )$.
\begin{lemma}
\label{L:garding}
Suppose $a \in S^0_{\Sigma_H, \delta}$ is real valued and $a \geq 0$.
Then 
\[
\lll a^w u, u \rrr \geq -C h^{1-\delta} \| u \|^2.
\]
In particular, 
\[
\lll a^w|_H \phi_h^H, \phi_h^H \rrr_{L^2(H)} \geq -C h^{1-\delta} \|
\phi_h^H \|^2_{L^2(H)}.
\]

\end{lemma}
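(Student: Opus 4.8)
The plan is to reduce the G\aa rding inequality for $\Op_h(S^0_{\Sigma_H,\delta})$ to the standard semiclassical sharp G\aa rding inequality in the favorable range $\delta' < 1/2$, by using the quantum normal form of Lemma~\ref{normal form} together with the rescaling of the fiber variables. First I would observe that, since a symbol $a \in S^0_{\Sigma_H,\delta}$ is (by separability, as used in the proof of Proposition~\ref{algebra}) a finite sum of products $a_1(x',\xi';h)\,\chi\big(h^{-\delta}(R(x',0,\xi')-1)\big)$ with $a_1 \in S^0$, and since we only need the estimate up to an $O(h^{1-\delta})$ error, it suffices to treat a single model term. Using the $h$-microlocal partition of unity $\{\psi_{U_j}^w\psi_{V_j}^w\}$ subordinate to the covering of $\supp a$ on which Lemma~\ref{normal form} applies, I would write $a^w = \sum_j \psi_{U_j}^w F(h)\big[F(h)^* a^w F(h)\big]F(h)^*\psi_{V_j}^w + O(h^\infty)$, exactly as in the proof of Proposition~\ref{algebra}, so that on each chart the operator is conjugated to the model operator $\big(\tilde\chi(h^{-\delta}\eta_1')\big)^w$ acting in the normal coordinates, with $\tilde\chi \geq 0$ whenever $a \geq 0$ (positivity of $a$ translates to positivity of the transferred symbol modulo $O(h^{1-\delta})$ lower-order terms, which is precisely what we can afford to lose).

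The key computational step is the rescaling $(y',\eta') \mapsto (y'/h^{\delta/2}, \eta'/h^{\delta/2})$, under which the model symbol $\tilde\chi(h^{-\delta}\eta_1')$ becomes $\tilde\chi(h^{-\delta/2}\eta_1')$ in the new variables, i.e.\ a symbol in the class $S^0_{\delta/2}$ with $\delta/2 < 1/2$, and this rescaling is unitary on $L^2$. For symbols in $S^0_{\delta'}$ with $\delta' = \delta/2 < 1/2$ the sharp G\aa rding inequality quoted in the excerpt gives $\langle b^w u, u\rangle \geq -C h^{1-2\delta'}\|u\|^2 = -Ch^{1-\delta}\|u\|^2$. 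Transferring back through the unitary rescaling and then through $F(h)$ (which satisfies $F(h)^* F(h) \cong \Id$ microlocally on the relevant sets, by part (ii) of Lemma~\ref{normal form}) and summing over the finitely many charts $j=1,\dots,N_0$ yields $\langle a^w u, u\rangle \geq -C h^{1-\delta}\|u\|^2$, with the $O(h^\infty)$ remainder absorbed. The second statement of the lemma is then immediate: one simply applies the first statement with $u = \phi_h^H$ on $L^2(H)$, noting that $a^w|_H$ denotes exactly this restricted quantization.

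I expect the main obstacle to be bookkeeping the errors incurred when conjugating by $F(h)$ and when replacing $a$ by its normal form: one must check that the discrepancy between $F(h)^* a^w F(h)$ and the clean model operator $\big(\tilde\chi(h^{-\delta}\eta_1')\big)^w$ is not merely $O(h^\infty)$ in the trivial symbol norm but genuinely lower order in the rescaled $S^0_{\delta/2}$ calculus — i.e.\ that after rescaling it contributes an error no worse than $O(h^{1-\delta})$ in operator norm, rather than something like $O(h^{1-\delta}\cdot h^{-\delta}) = O(h^{1-2\delta})$ which would be fatal for $\delta$ near $1$. This is handled by being careful that the subprincipal corrections in the $\#$-expansion come with genuine powers of $h$ (not $h^{1-\delta}$) before rescaling, as is visible in the expansion $(\chi_1\#\chi_2)(h^{-\delta}\eta_1') = \chi_1\chi_2(h^{-\delta}\eta_1') + R$ with $R \in S^{-\infty}_0$ displayed in the proof of Proposition~\ref{algebra}; the same mechanism controls the positivity defect here. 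A secondary point worth a sentence is that one should invoke the $\delta/2 < 1/2$ sharp G\aa rding bound in the form already cited in the excerpt (equation~\eqref{garding} even gives the stronger Fefferman--Phong version), so no new hard analysis is needed — the lemma is essentially a transfer-of-structure argument.
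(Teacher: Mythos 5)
Your proposal is correct and follows exactly the route the paper indicates (the paper offers only a one-sentence pointer rather than a full proof): conjugate by the normal form FIO of Lemma~\ref{normal form}, rescale $(y',\eta'/h^\delta)\mapsto(y'/h^{\delta/2},\eta'/h^{\delta/2})$ to land in $S^0_{\delta/2}$ with $\delta/2<1/2$, apply the standard sharp G\aa rding inequality to obtain the $-Ch^{1-2(\delta/2)}=-Ch^{1-\delta}$ bound, and transfer back. You have also correctly flagged and dispatched the one point that needs care — that the conjugation and $\#$-expansion errors are genuine $O(h)$-order in the pre-rescaled calculus (thanks to separability of the symbol and the structure of the normal form), and hence contribute only $O(h^{1-\delta})$ after rescaling rather than $O(h^{1-2\delta})$.
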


\section{Eigenfunction energy localization: estimating the exterior mass} 
\label{S:loc}

Let
$M, H$ and $\phi_{\lambda_j}; j=1,2,...$ be as in the Introduction. 
A key part of the proof of Theorem~\ref{main} involves estimating the mass of restricted eigenfunctions $\phi_h |_H$ in regions exterior to the coball bundle of $H$ and on 2-microlocal scales $\delta >1/2$ (see section \ref{mainthm}).   In this section, for the benefit of the reader, we first review some known results on mass concentration for the ambient eigenfunctions on M. Then, in Proposition \ref{prop:mass-decay}, we give the analogous mass estimates for the restricted eigenfunctions $\phi_h |_H.$ The latter results appear to be new.


\subsection{Exterior mass estimates on $M.$} \label{1 scales}

  Let $h \in \{ \lambda_j^{-1} \}; j=1,2,...$. Given $0 < \epsilon_0
  <1$ an arbitrary small number, let $\chi(x,\xi) \in
  C^{\infty}_{0}(T^*M)$ be equal to one on the annulus $A(\epsilon_0)= \{ (x,\xi);  (1-\epsilon_0/10)  < |\xi|_g < (1 + \epsilon_0/10)$ and with supp $\chi \subset A(2\epsilon_0).$ Let $\tilde{\chi} \in C^{\infty}_{0}$ be another cutoff equal to one on $A(2\epsilon_0)$ and with supp $\tilde{\chi} \subset A(4\epsilon_0).$   Consider  the eigenfunction equation
$$ (-h^2 \Delta_g - 1) \phi_{h} = 0.$$
Then, $P(h):= -h^2 \Delta_g - 1$ is $h$ elliptic for $(x,\xi) \in T^*M - A(\epsilon)$. So, one can construct  an $h$-microlocal parametrix with $Q(h) \in Op_{h}(S^{0}_{0,0})$ so that
$$(1- \tilde{\chi}(h)) Q(h)  P(h) (1-\chi(h)) \phi_{h} =
(1-\tilde{\chi}(h)) \phi_{h} + {\mathcal O}(h^{\infty}).$$ Since $P(h)
\phi_{h} = 0$  and $\sigma( [ P(h), (1-\chi(h))] (x,\xi) = 0$ for
$(x,\xi) \in $ supp$ (1-\tilde{\chi}(h)),$  one gets the well-known concentration estimate
\begin{equation} \label{con1} 
 \|  (1-\tilde{\chi}(h)) \phi_{h} \|_{L^2} = {\mathcal O}(h^{\infty})
 .
\end{equation}
 A similar argument with the derivatives $\partial_x^{\alpha} \phi_h(x)$ combined with Sobolev embedding implies
 \begin{equation} \label{con1.1}
  \|  (1-\tilde{\chi}(h)) \phi_{h} \|_{C^k} = {\mathcal
    O}(h^{\infty}),\end{equation}
and as a consequence
\begin{equation*}
WF_{h}(\phi_{h}) \subset S^*M. 
 \end{equation*}
 When $\delta \in (0,1/2),$ the parametrix construction above extends to cutoffs $\chi_{\delta}(h) \in Op_{h}(S^{0}_{\delta})$ without change and consequently, so do the mass estimate (\ref{con1}) and (\ref{con1.1}). When $\delta \in [1/2,1),$ the same is true provided one uses the $h$-pseudodifferential calculus 2-microlocalized along the hypersurface $\Sigma = S^*M \subset T^*M.$ One way to do this is as follows: we choose $\psi \in \mathcal{S}(\RR)$ with $\psi(0) = 1$ and the Fourier transform of $\psi$ compactly supported, and  write the 
operator $\chi_{\delta}(h):= \psi((-h^2 \Delta - 1)h^{-\delta})$ in terms of the Fourier Transform of $\psi$ and the semiclassical propagator of $-h^2 \Delta$. We can thus derive that this operator is in the 2-microlocal calculus $\Psi^0_{S^*M,\delta}(M)$ with exponent $\delta$. By construction, $\psi((-h^2 \Delta - 1)h^{-\delta}) \phi_h = \phi_h$. Now we apply an operator $A$ in $\Psi^0_{S^*M,\delta}(M)$ supported where $|\xi|_g \geq 1 + C h^\delta$ to $\psi((-h^2 \Delta - 1)h^{-\delta}) \phi_h$. The symbol calculus for $\Psi^0_{S^*M,\delta}(M)$ shows that for $C$ sufficiently large, the symbol of the composition is $O(h^\infty)$, and therefore $A \phi_h = O(h^\infty)$.  
To summarize, for eigenfunction masses on $M$, one has the exterior mass estimate 
\begin{equation}\label{massM}
\| ( 1 - \chi_{\delta}(h)) \phi_h \|_{L^2(M)} = O(h^\infty), \,\,\,\, 0 \leq \delta <1.
\end{equation}

\subsection{Exterior mass estimates on $H$} \label{extmass}

We will need to control exterior mass of eigenfunction restrictions to $H$ in the 2-microlocal setting where $\delta > 1/2.$ Unlike the case of the ambient manifold $M$ in (\ref{massM}), the allowable range of 2-microlocal scales (ie. the range of $\delta$'s) depends on the curvature of $H$ inside $M.$

In this section, we prove the following

\begin{proposition}\label{prop:mass-decay}
Let $(M,g)$, $H$ and $\phi_h^H$ be as above. 
Suppose that $0 \leq \delta < 2/3$, and let $(\chi_{out})_{h,\delta}^{w}$ be as defined as in the Introduction. Then 
\begin{equation}\label{massH}
(\chi_{out})_{h,\delta}^{w} \phi_h^H = O(h^\infty) \text{ in } L^2(H).
\end{equation}
Moreover, in the case where $H$ is totally geodesic, the exterior mass estimate (\ref{massH}) holds for all $0  \leq \delta < 1.$
\end{proposition}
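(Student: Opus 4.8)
The plan is to reduce the restricted mass estimate to the ambient exterior mass estimate \eqref{massM} together with a careful analysis of the normal derivative $\xi_n$ along the characteristic variety. The starting point is the observation that on $T^*H$, the symbol of the tangential operator is $R(x',0,\xi')$, while in $T^*M$ near $H$ the full symbol factors as $|\xi|_g^2 = \xi_n^2 + R(x',x_n,\xi')$. On the characteristic set $\{|\xi|_g = 1\}$ we therefore have $\xi_n^2 = 1 - R(x',x_n,\xi')$, so that a point in $T^*H$ with $R(x',0,\xi') \geq 1 + C h^\delta$ can only arise as the projection of a point in $\WF_h(\phi_h) \subset S^*M$ if $x_n$ is bounded away from $0$ on a scale controlled by how far $R$ is pushed past $1$ — this is precisely the tube-of-influence / finite-speed-of-propagation phenomenon, and it is here that the exponent $2/3$ will enter.

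First I would set up the following mechanism. Fix the cutoff $\psi((-h^2\Delta-1)h^{-\delta})$ from Section~\ref{1 scales}, so that $\psi((-h^2\Delta-1)h^{-\delta})\phi_h = \phi_h + O(h^\infty)$ and this operator lies in the $2$-microlocal calculus $\Psi^0_{S^*M,\delta}(M)$. Composing with the restriction operator $\gamma_H$, one has $\phi_h^H = \gamma_H \psi((-h^2\Delta-1)h^{-\delta})\phi_h + O(h^\infty)$. Then I would commute $(\chi_{out})_{h,\delta}^w$ (acting on $H$) through $\gamma_H$: the composition $(\chi_{out})_{h,\delta}^w \circ \gamma_H$ can be written as $\gamma_H \circ B(h)$ for a suitable ambient operator $B(h)$ whose symbol is supported where $R(x',x_n,\xi') \geq 1 + \tfrac{1}{2} C h^\delta$ for $|x_n|$ small — because $R$ is continuous in $x_n$ and the restriction fixes $x_n = 0$. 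On the support of $B(h) \circ \psi((-h^2\Delta-1)h^{-\delta})$ one has, on the characteristic variety, $\xi_n^2 = 1 - R \leq -\tfrac{1}{2}C h^\delta + O(h^\delta)$, which is \emph{negative}: so the composed symbol is $O(h^\infty)$ by the same symbolic-calculus argument as for \eqref{massM}, provided the $2$-microlocal calculus tolerates exponent $\delta$. The subtlety is that the normal variable $\xi_n$ now only sees a gap of size $h^\delta$, but is itself measured on the $O(1)$-scale transverse to $H$; squaring $\xi_n$ and Taylor-expanding $R(x',x_n,\xi')$ in $x_n$ around $x_n=0$ forces one to control $x_n$ on scale $h^{\delta/2}$, and the second microlocalization interacts with this in a way that is only consistent when $\delta < 2/3$. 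I expect the cleanest route is to use the $h$-wave propagator: write $\psi((-h^2\Delta-1)h^{-\delta}) = \tfrac{1}{2\pi}\int \hat\psi(t) e^{it(-h^2\Delta-1)h^{-\delta}}\,dt$ with $\hat\psi$ compactly supported, so the relevant times are $|t| \lesssim h^{-\delta}$; on this time interval a bicharacteristic starting on $H$ with tangential momentum $R \geq 1 + Ch^\delta$ stays in $\{x_n = 0\}$ (since $\xi_n = 0$ there as the projection of the characteristic point would need $\xi_n^2 < 0$)... but more carefully, one propagates for time $h \cdot h^{-\delta} = h^{1-\delta}$ in the unrescaled flow, the bicharacteristic moves in $x_n$ by $O(h^{1-\delta})$, and $R$ changes by $O(h^{1-\delta})$; requiring this change to be smaller than the gap $h^\delta$ gives $1 - \delta > \delta$, i.e. $\delta < 1/2$ — so the naive propagation argument only gives $1/2$, and recovering $2/3$ genuinely requires exploiting the \emph{second-order} vanishing of $\xi_n^2$, equivalently one power of $h$ coming from the sharp G\aa rding / Fefferman–Phong inequality \eqref{garding} in the $S^0_{\Sigma_H,\delta}$ calculus.

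The main obstacle, then, is exactly the passage from $\delta < 1/2$ to $\delta < 2/3$: one must quantify that near a glancing point $\xi_n = 0$ the defect $R - 1$ controls $\xi_n^2$ quadratically, and that the second-microlocal cutoff $\chi((R-1)/h^\delta)$ on $H$ lifts to an ambient cutoff supported where $|\xi|_g^2 - 1 \gtrsim h^\delta$ \emph{after} accounting for the $x_n$-width $h^{\delta/2}$ of the relevant tube; the budget then reads $\delta/2 + \delta/2 \cdot (\text{order of vanishing in } x_n)$ against $1$, and with $R(x',x_n,\xi') = R(x',0,\xi') + O(x_n)$ one more Taylor term (or the Fefferman–Phong gain of $h^{2-4\delta}$ over the crude $h^{1-2\delta}$) yields the threshold $\delta < 2/3$. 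I would organize this as: (i) state precisely the ambient operator $B(h)$ with $(\chi_{out})_{h,\delta}^w \gamma_H = \gamma_H B(h) + O(h^\infty)$; (ii) show $B(h)\psi((-h^2\Delta-1)h^{-\delta})$ has symbol $O(h^\infty)$ in $\Psi^\bullet_{S^*M,\delta}$ using the $\xi_n^2 = 1-R$ relation and the composition calculus, this being where $\delta < 2/3$ is used; (iii) conclude $(\chi_{out})_{h,\delta}^w\phi_h^H = \gamma_H B(h)\psi(\cdots)\phi_h + O(h^\infty) = \gamma_H O(h^\infty) + O(h^\infty) = O(h^\infty)$ in $L^2(H)$, using that $\gamma_H$ is bounded from $H^{s}(M) \to L^2(H)$ for $s > 1/2$ together with the $C^k$-smallness \eqref{con1.1}.

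Finally, for the totally geodesic case: when $H$ is totally geodesic the second fundamental form vanishes, which means $R(x',x_n,\xi') = R(x',0,\xi') + O(x_n^2)$ rather than $O(x_n)$ — the $x_n$-linear term drops out. This upgrades the Taylor budget by one full order: the $x_n$-width of the relevant tube now contributes $h^{2\cdot(\delta/2)} = h^\delta$ worth of defect matched against a gap of the same size with room to spare, and the obstruction disappears, allowing $\delta < 1$ exactly as in the ambient estimate \eqref{massM}. Concretely, I would redo step (ii) observing that on $\{x_n = 0\}$ the operator $-h^2\Delta_g$ restricted to its action transverse to $H$ has no first-order-in-$x_n$ coefficient, so the same parametrix/propagation argument that gave \eqref{massM} for all $\delta < 1$ applies verbatim after restriction, with no loss.
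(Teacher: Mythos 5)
Your proposal takes a genuinely different route from the paper, but there is a real gap at the crucial step, and you yourself flag it without resolving it. The paper does not argue via the ambient $2$-microlocal calculus and a commutation of $(\chi_{out})_{h,\delta}^w$ past $\gamma_H$. Instead it uses Sogge's approximate projection to write $\phi_h(x)$ as an oscillatory integral with phase $\mathrm{dist}(x,y)$ of semiclassical frequency $\leq 1$, composes with a normal-form FIO straightening $S^*H$, and then integrates by parts in a single explicit oscillatory integral. The quantity controlling the non-stationarity is $\eta_1+\cos^2\theta$, where $\theta$ is the angle the connecting geodesic makes with the normal to $H$; the exponent $2/3$ comes out of the specific ratio \eqref{worstcase}, estimated by splitting into $\cos\theta\lessgtr h^{\delta/2}$. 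So the mechanism you identify — that $\xi_n^2 = 1 - R$ should force the symbol into the elliptic region of $-h^2\Delta-1$ — is the right geometric intuition, and corresponds to the paper's $\eta_1 = -\cos^2\theta$ on the stationary set, but the paper's realization of it is by direct phase analysis, not symbol composition.

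The gap in your plan is at step (ii). Because $(\chi_{out})_{h,\delta}^w$ quantizes a symbol depending only on $(x',\xi')$, the ambient operator $B(h)$ you factor off has symbol independent of $(x_n,\xi_n)$. Consequently its essential support is a cylinder over $\{R(x',0,\xi')\geq 1+Ch^\delta\}$, which intersects $S^*M$ nontrivially for $|x_n|$ of order $h^\delta$ or larger: away from $x_n=0$ the constraint $\xi_n^2 = |\xi|^2_g - R(x',x_n,\xi')$ involves $R$ at nonzero $x_n$, and since $R(x',x_n,\xi') = R(x',0,\xi') + O(x_n)$ there is no sign contradiction. So the composed symbol $B(h)\,\psi((-h^2\Delta-1)h^{-\delta})$ is not $O(h^\infty)$ in any symbol class, and you cannot conclude by the same parametrix argument as for \eqref{massM}. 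Fixing this requires localizing in $x_n$ on a scale tied to $h^{\delta/2}$ and then quantifying how the defect $R-1$ and the transverse momentum $\xi_n^2$ trade off — this is exactly the analysis the paper carries out via the angle $\theta$ and the case split, and which you acknowledge you do not have. Your appeal to Fefferman–Phong and the ``Taylor budget'' $\delta/2 + \delta/2\cdot(\text{order of vanishing}) < 1$ is not a proof, and as written does not produce the threshold $2/3$ (for a generic first-order $x_n$-dependence it gives no nontrivial range at all). Likewise, your intuition for the totally geodesic case — that the first-order $x_n$ term in $R$ drops out — is correct and matches the paper's reason ($d_{y'}\theta$ vanishes where $\cos\theta=0$), but again you do not supply the estimate that converts this into $\delta<1$. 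In short: right picture, wrong mechanism, and the key quantitative step is missing.
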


\begin{proof}
In view of the discussion in Section~\ref{1 scales}, it suffices to prove that 
\begin{equation}
L_h (\chi_{out})_{h,\delta}^{w} \phi_h^H = O(h^\infty) \text{ in } L^2(H)
\end{equation}
for every pseudodifferential operator $L$ of semiclassical order zero and compact microsupport.

By definition of the Sj\"ostrand-Zworski calculus, $L_h(\chi_{out})_{h,\delta}^{w}(x',hD_{x'})$ is given in a small microlocal neighbourhood of a point $q \in T^* H$ by an expression of the form 
$$
(2\pi h)^{-(n-1)} \int e^{i(y-y') \cdot \eta/h} \chi_+\big( \frac{\eta_1}{h^\delta}\big) \, d\eta
$$
after conjugation by a semiclassical  FIO $T_h : L^2(H) \to L^2(\mathbb{R}^{n-1})$ associated to a canonical transformation $\rho : T^* H \to T^* \mathbb{R}^{n-1}$, defined in a neighbourhood of $WF_h(L_h)$,  that `straightens' $\Sigma = S^* H$ locally, in the sense that $\rho^*(\eta_1) = |\cdot|_{\tilde g}^2 - 1$ (where $\tilde g$ denotes the metric induced on the fibres of $T^*H$). (Recall that $\chi_+(t)$ is equal to $0$ for $t \leq 1/2$ and $1$ for $t \geq 1$.)

If $T_h$ is parametrized locally near $q$ by the phase function $\Phi(y, y', v)$ then we have 
\begin{equation}\begin{gathered}
L_h(\chi_{out})_{h,\delta}^{w}(x',hD_{x'}) \phi_h^{H} = \\
T_h^{-1}  \circ h^{\ast} \int    e^{i(y - y'')\cdot \eta/h} \chi_+\big( \frac{\eta_1}{h^\delta}\big) e^{i\Phi(y'', y', v)/h} a_1(y'', y', v, h) \phi_h^{H}(y') \, dy' \, dv  \, dy''\, d\eta,
\end{gathered}\end{equation}
for some exponent $\ast$ (depending on the number of components of $v$) which is not relevant, as we are about to show an $O(h^\infty)$ estimate. 

Next, we express $\phi_h^H$ locally in terms of its values on a surrounding annulus (up to $O(h^\infty)$ errors). To do this, we use Sogge's approximate projection operator  $\tilde \chi_\lambda$ \cite[Section 5.1]{Sogge-FICA}. Let us recall that $\tilde \chi_\lambda$ is defined to be the operator $\chi(\sqrt{\Delta} - \lambda)$, where $\chi$ is a Schwartz function with Fourier transform $\hat \chi$ having support in the interval $[\epsilon_0/2, \epsilon_0]$, where $\epsilon_0$ is chosen small enough (it suffices to take $\epsilon_0$ smaller than the injectivity radius of $M$). Then, as shown in \cite{Sogge-FICA}, the Schwartz kernel of $\chi_\lambda$ takes the form 
$$
\chi_\lambda(x, y) = \lambda^{(n-1)/2} a_2(y, x, \lambda) e^{-i\lambda \dist(x, y)} + R(x, y, \lambda),
$$
where $a_2$ is smooth with all derivatives bounded uniformly in $\lambda$, and is supported where $d(x,y) \in [(2C_0)^{-1} \epsilon_0, 2C_0 \epsilon_0]$ for some $C_0 > 1$. On the other hand, $R$ is smooth with all derivatives $O(\lambda^{-N})$ for every $N$. 

Let $h = \lambda^{-1}$. If we scale $\chi$ so that $\chi(0) = 1$, then we have
$$
\chi_{h^{-1}} \phi_h = \phi_h.
$$

We may assume without loss of generality that the projection of the microsupport of $L_h$ to $M$ is contained in a coball $B(p, r)$ of radius $r$, where $r + 2C_0 \epsilon_0$ is smaller than the injectivity radius. In that case, we can write, using a single coordinate patch, 
\begin{equation} \label{wkb}
\phi_h(x) = h^{-(n-1)/2} \int e^{-i\dist(x,y)/h} a_2(y, x, h) \phi_h(y) \, dy + O(h^\infty), \quad x \in B(p, r), 
\end{equation}
where $a_2$ is smooth with all derivatives bounded uniformly in $h$ and supported in $B(p, \iota)$ where $\iota$ is chosen smaller than the injectivity radius. It follows that we can write 
\begin{equation}\begin{gathered}
L_h (\chi_{out})_{h,\delta}^{w}(x',hD_{x'}) \phi_h^{H} = \\
T_h^{-1} \circ h^{\ast - (n-1)/2} \int    e^{i(y - y'')\cdot \eta/h}  e^{i\Phi(y'', y', v)/h} e^{-i\dist(y', x)/h}  \\ \times  \chi_+(\eta_1/h^\delta) a_1(y'', y', v, h) a_2( x,y' , h) \phi_h (x) \, dy' \, dv  \, dy'' \, dx \, d\eta + O(h^\infty). 
\end{gathered}\label{intexp}\end{equation}

The point of expressing $\phi_h$  in terms of itself  is that we then have an explicit representation of $\phi_h^H= \gamma_H \chi_{h^{-1}} \phi_h$ in terms of a oscillatory kernel with phase function $\dist(y', x)$, which has  oscillations of semiclassical frequency $\leq 1$. On the other hand, the $\chi_+$ term is supported where the semiclassical frequencies are at least $1 + h^{\delta}$. So the phase in \eqref{intexp} should be nonstationary on the support of the integral, allowing us to  perform integration by parts in the above integral. 

Let $\Psi$ denote the sum of the phase functions in \eqref{intexp}. Then, since $\Phi$ parametrizes the canonical relation 
$$
\big\{ (y'', \eta'', y', \eta') \mid (y'', \eta'') = \rho(y', \eta') \big\},
$$
where $\eta'$, respectively $\eta''$, denotes the dual variable to $y' \in H$, respectively $y'' \in \mathbb{R}^{n-1}$, 
 we have 
\begin{equation}\begin{gathered}
d_{v} \Psi = 0 \implies \rho(y', -d_{y'} \Phi(y'', y', v)) = (y'', d_{y''}\Phi(y'', y', v)) \\
d_{y''} \Psi = 0 \implies \eta = d_{y''}\Phi(y'', y', v) \\
d_{y'} \Psi = 0 \implies d_{y'} \Phi(y'', y', v) =  d_{y'} \dist(y', x).
\end{gathered}\end{equation}

Putting these together we find that
$$
d_{v, y'', y'} \Psi = 0 \implies \rho(y',-d_{y'} \dist(y', x)) = (y'', \eta).
$$
Since we chose $\rho$ such that $\rho^* \eta_1 = |\eta'|_{\tilde g}^2 - 1$, this shows that 
$$
d_{v, y'', y'} \Psi = 0 \implies \eta_1 = \Big| d_{y'} \dist(y', x) \Big|_{\tilde g}^2 - 1.
$$
It follows that we can write 
\begin{equation}
\eta_1 +1 -  \Big| d_{y'} \dist(y', x) \Big|_{\tilde g}^2 = \sum_i \Big( \gamma_i d_{v_i} \Psi + \gamma'_i d_{y'_i} \Psi + \gamma''_i d_{y''_i} \Psi \Big),
\label{eta1-identity}\end{equation}
where the $\gamma_i, \gamma'_i, \gamma''_i$ are smooth. 

Now consider $(x, y')$ such that $y' \in H$ and $(x, y', 0)$ is in the support of $a_2$. For such $(x, y')$, we denote by $e$ the unit vector in $T_{y'} M$ that generates the short geodesic between $y'$ and $x$, and 
write $\theta = \theta(x, y')$ for the angle in $T M$ (measured using the metric $g$) between the vector $e$ and the normal vector to $H$ at $y'$. By construction, $x$ and $y'$ cannot be closer than $(2C_0)^{-1} \epsilon_0$ together, so  $\theta$ is a smooth function of $x$ and $y'$. Then we can express
$$
\Big| d_{y'} \dist(y', x) \Big|_{\tilde g}^2 = \sin^2 \theta.
$$
Using \eqref{eta1-identity} we have 
\begin{equation}
\Big( \frac{h}{i} \frac{\gamma_i d_{v_i} + \gamma'_i d_{y'_i}  + \gamma''_i d_{y''_i} }{\eta_1 + \cos^2 \theta} \Big)^N e^{i\Psi(x, y, y', y'', \theta, \eta)/h} = e^{i\Psi(x, y, y', y'', \theta, \eta)/h}.
\label{ibp}\end{equation}
We insert this in \eqref{intexp} and  integrate by parts $N$ times. The derivatives are harmless (in the sense that they produce no negative powers of $h$) when they hit the factors $a_1 a_2 \chi_+(\eta_1/h^{\delta})$ or the $\gamma'_i, \gamma''_i, \gamma'''_i$  (note that there are no $\eta$ derivatives to fall on the $\chi_+$ factor). However,
 since $\theta$ is a function of $(x, y')$,  the $y'$ derivatives can hit the denominator of \eqref{ibp} and then we have to estimate more carefully. Consider a single $y'$ derivative hitting the  $\cos^2 \theta$ factor in the denominator of \eqref{ibp}. 
 In that case, we get an overall factor 
\begin{equation}
h \frac{2 \cos \theta \sin \theta \sum_i \gamma'_i d_{y'_i}\theta}{(\eta_1 + \cos^2 \theta)^2}.
\label{worstcase}\end{equation}
Notice that on the support of $\chi$, we have $\eta_1 \geq h^\delta/2$. The denominator can therefore be small when $\cos \theta$ is small. However, notice that we also have a factor of $\cos \theta$ in the numerator. We can estimate this term as follows: either $\cos \theta \leq h^{\delta/2}$ or $\cos \theta \geq h^{\delta/2}$. In the former case, we get that \eqref{worstcase} is bounded by 
$$
Ch \frac{h^{\delta/2}}{h^{2\delta}} = Ch^{1 - 3\delta/2}.
$$
In the latter case, we find that \eqref{worstcase} is bounded by 
$$
Ch \frac{\cos \theta}{\cos^4 \theta} \leq C h^{1 - 3\delta/2}.
$$
In either case, we gain a positive power of $h$ for each integration by parts provided that $\delta < 2/3$. Higher numbers of $y'$ derivatives hitting either the denominator in \eqref{ibp} or the $\cos \theta$ factors in \eqref{worstcase} can be estimated similarly. Thus, by integrating by parts sufficiently many times, we prove that for any $N$ we have 
$$
L_h(\chi_{out})_{h,\delta}^{w}(x',hD_{x'}) \phi_h^{H} = O(h^N), \text{ provided } \delta < 2/3.
$$

 In the case where the submanifold $H$ is totally geodesic, i.e.~ its second fundamental form vanishes, we can see by inspecting the proof that we can prove Proposition~\ref{prop:mass-decay} for all $\delta < 1$. To see this, consider the quantity 
$
d_{y'} \theta
$
from \eqref{worstcase}. Let $n = n_{y'}$ denote the unit normal vector at $y' \in H$, and let $e_x$ denote the unit length vector field pointing away from $x$, that is, in the direction of geodesics emanating from $x$. Thus $\cos \theta = n_{y'} \cdot e_x$. If we differentiate in $y'$ we find that 
$$
-(\sin \theta) d_{y'} \theta = \nabla_{y'} \big( n_{y'} \cdot e_x \big). 
$$
Suppose that $\cos \theta = 0$. Then $\sin \theta = 1$, so this factor can be ignored. 
More importantly, if $e_x$ is normal to $n_{y'}$, or equivalently tangent to $H$, then $H$ totally geodesic means that the whole geodesic generated by $e_x$ is contained in $H$. In particular, this implies that $x \in H$ and the vector field $e_x$ is tangent to $H$. Therefore, $n_{y'} \cdot e_x$ vanishes identically. So this derivative is zero. 

It follows that $d_{y'} \theta = 0$ when $\cos \theta = 0$. Since $\cos \theta$ vanishes simply, this implies that $d_{y'} \theta =  k(x, y')\cos \theta$ for some smooth function $k$. Thus in the case that $H$ is totally geodesic, we get an extra factor of $\cos \theta$ in the numerator of \eqref{worstcase}, leading to the conclusion that \eqref{worstcase} can be estimated by a constant times $h^{1-\delta}$. Hence, in this case, we can take any $\delta < 1$. 


\end{proof}

\begin{corollary}\label{cor:Ck}
The conclusion of Proposition~\ref{prop:mass-decay} can be strengthened to 
\begin{equation}
(\chi_{out})_{h,\delta}^{w} \phi_h^H = O(h^\infty) \text{ in } C^k(H)
\label{ext-mass-Ck}\end{equation}
for any $k \in \mathbb{N}$. 
\end{corollary}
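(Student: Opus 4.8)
The plan is to upgrade the $L^2$ bound of Proposition~\ref{prop:mass-decay} to a $C^k$ bound by interpolating it against a crude a priori estimate, so that no further oscillatory integral analysis is needed.

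Write $u_h := (\chi_{out})_{h,\delta}^w \phi_h^H$. The first input is Proposition~\ref{prop:mass-decay} itself: $\|u_h\|_{L^2(H)} = O(h^\infty)$, valid in the same range of $\delta$ as there ($\delta < 2/3$ in general, $\delta < 1$ when $H$ is totally geodesic). The second input is a crude polynomial bound: for every $k$ there is $N_k$ with $\|u_h\|_{C^k(H)} = O(h^{-N_k})$. This follows from standard facts: since $\phi_h$ is an $L^2$-normalized eigenfunction of $-h^2\Delta_g - 1$, semiclassical elliptic regularity gives $\|\phi_h\|_{H^s_h(M)} = O(1)$ for every $s$, hence $\|\phi_h\|_{C^k(M)} = O(h^{-k - \dim M})$; restriction to $H$ is harmless, $\|\phi_h^H\|_{C^k(H)} \le \|\phi_h\|_{C^k(M)}$; and $(\chi_{out})_{h,\delta}^w$ maps $C^k(H)$ into itself with operator norm $O(h^{-C_k})$ (the $L^2$ case being the estimate in the Remark following Proposition~\ref{algebra}, the $C^k$ case following the same way, after localizing $\xi'$ to a compact set using the compactness of $WF_h(\phi_h^H)$ established in Section~\ref{S:loc}).

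Now I interpolate. With $d = \dim H$, set $\sigma = k + d + 1$, so that $H^\sigma(H) \hookrightarrow C^k(H)$ and $C^{2\sigma}(H) \hookrightarrow H^{2\sigma}(H)$ continuously on the compact manifold $H$. The elementary Sobolev interpolation inequality $\|u\|_{H^\sigma} \le \|u\|_{L^2}^{1/2}\|u\|_{H^{2\sigma}}^{1/2}$ then gives
$$
\|u_h\|_{C^k(H)} \le C\, \|u_h\|_{L^2(H)}^{1/2}\, \|u_h\|_{C^{2\sigma}(H)}^{1/2}.
$$
Inserting $\|u_h\|_{L^2(H)} = O(h^M)$ — valid for every $M$ by the first input — and $\|u_h\|_{C^{2\sigma}(H)} = O(h^{-N_{2\sigma}})$ by the second, we obtain $\|u_h\|_{C^k(H)} = O(h^{(M - N_{2\sigma})/2})$; since $M$ is arbitrary, this is $O(h^\infty)$. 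As $k$ is arbitrary, this is exactly \eqref{ext-mass-Ck}, and the totally geodesic case is identical with $\delta < 1$.

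I do not expect any real obstacle here beyond Proposition~\ref{prop:mass-decay} itself; the argument is soft, the only points needing care being the a priori bound (standard elliptic estimates together with the mapping properties of the $\delta > 1/2$ calculus reviewed in Section~\ref{2microlocal}). One could equally well argue more directly by differentiating the oscillatory integral \eqref{intexp} in the output variable and rerunning the integration by parts \eqref{ibp}: each such derivative costs at most one factor of $h^{-1}$, which is absorbed by performing a few extra integrations by parts, and one again arrives at \eqref{ext-mass-Ck}.
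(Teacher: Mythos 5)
Your proof is correct, and it takes a genuinely different route from the paper's. The paper argues directly: it applies $l$ derivatives to $L_h(\chi_{out})_{h,\delta}^w \phi_h^H$, observing that this costs a factor $h^{-l}$ which is then absorbed by rerunning the integration-by-parts argument of Proposition~\ref{prop:mass-decay} (which gains $O(h^\infty)$), and finishes with Sobolev embedding for $l > k + (n-1)/2$. Your proof instead interpolates the $L^2$ estimate against a crude polynomial $C^{2\sigma}$ a priori bound via the Sobolev interpolation inequality $\| u \|_{H^\sigma} \le \| u \|_{L^2}^{1/2} \| u \|_{H^{2\sigma}}^{1/2}$, which is entirely ``soft'' --- it never reenters the oscillatory integral analysis. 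What the interpolation buys is robustness: it reduces the $C^k$ statement to the $L^2$ statement plus elementary a priori control, with no structural information about the proof of Proposition~\ref{prop:mass-decay} needed. What the paper's approach buys is directness: one line and no auxiliary polynomial bound. The one point in your argument that deserves the extra sentence you gave it is the crude $O(h^{-N_k})$ $C^k$ bound for $u_h$; the cleanest justification is the one you sketched, namely using the compactness of $WF_h(\phi_h^H)$ to insert a compactly supported microlocal cutoff $\psi^w$ (with $(1-\psi)^w \phi_h^H = O_{C^\infty}(h^\infty)$), after which the remaining operator has symbol compactly supported in $\xi'$ and the polynomial $C^k$ bound follows by differentiating under the integral. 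You in fact flag the paper's differentiate-and-rerun argument as an alternative at the end, so you evidently see both routes.
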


\begin{proof} We apply $l$ derivatives to $L_h(\chi_{out})_{h,\delta}^{w}(x',hD_{x'}) \phi_h^{H}$. This brings down a factor of $h^{-l}$. Applying the argument above shows that the result is $O_{L^2}(h^\infty)$. The Sobolev embedding theorem then gives \eqref{ext-mass-Ck}, provided $l > k + (n-1)/2$. 
\end{proof}

\begin{remark}
It is worth noting that the upper limit of $2/3$ on the size of
$\delta$ is sharp. Consider the example of the unit disc in the plane,
e.g.~the set $\{ r < 1 \}$ in standard polar coordinates. Let $H$ be
the circle $\{ r = 1/2 \}$. Dirichlet eigenfunctions with eigenvalue
$\lambda^2$ take the form $f_n = c_n e^{in \theta} J_n(\lambda r)$, where $J_n$ is the standard Bessel function of order $n$,  and where $J_n(\lambda) = 0$. There are pairs $(n, \lambda)$ where 
$$
\lambda - 2n \in [-z_1 n^{1/3}, -z_2 n^{1/3}], \quad z_1, z_2 > 0. 
$$
That is, $2n$ is a bit bigger than $\lambda$, by an amount  $z \lambda^{1/3}$ where $z \in [z_1, z_2]$. This means that, at $H$, that is, when $r=1/2$,  we are near the turning point of $f_n$, where $f_n$ has Airy asymptotics given by \cite[9.3.43]{AS}. Normalizing the eigenfunction in $L^2$ requires that $c_n \sim n^{1/6}$. Applying a semiclassical derivative means we gain a factor $n^{-1/3}$ since this is the length scale on which solutions of Bessel's equation oscillate near the turning point. 
 This implies that 
$$
f_n'(n - z n^{1/3}) \sim n^{-1/6} \mathrm{Ai}( 2^{1/3}z),
$$
where $\mathrm{Ai}$ is the Airy function. In particular, we see that on $H$, the restriction of an eigenfunction can contain semiclassical frequencies of the size $1 + z h^{2/3}$, $z > 0$, which decay only polynomially as $h \to 0$. However, the rapid decay of solutions of Airy's equation as $z \to \infty$ means that frequencies of the size $1 + c h^{2/3 - \epsilon}$, $c > 0$, $\epsilon > 0$, decay rapidly  as $h \to 0$, in agreement with Proposition~\ref{prop:mass-decay}. 
\end{remark}

\begin{remark} It is interesting to contrast the sharpness of the exponent $\delta = 2/3$  in mass concentration on $H$ with the situation on $M$. In the case of $M$, (see (\ref{massM})) a parametrix computation   using the 2-microlocal calculus associated to $S^* M$ shows that the exterior mass of eigenfunctions in the region where $|\xi|_g \geq 1 + C h^{\delta}$ is $O(h^\infty)$ for {\em any} $\delta < 1$. When $H$ is totally geodesic, the same is true for the exterior mass of the restricted eigenfunctions $\phi_h |_{H}.$ However,  when $H$ has positive definite second fundamental form, the disc example above shows that one must restrict to the range $0 \leq \delta < 2/3.$ 
\end{remark}

\section{Improved Neumann estimate} \label{mainthm}
In this section, we return to the computation started in the introduction. We have 
\begin{align} \label{Rellich}
\frac{i}{h}  \int_{M_-} [-h^2 \Delta -1, &\chi(x_n) h D_n] \phi_h \overline{\phi_h}
dx \nonumber \\
 = \int_H ((hD_n)^2  \phi_h)|_H \overline{\phi_h}|_H d \sigma_H 
& + \int_H (h D_n \phi_h)|_H \overline{hD_n \phi_h}|_H d \sigma_H .
\end{align}
and, as outlined in the Introduction, this yields 
\begin{equation}\begin{aligned}
 \int_H (1 + h^2 \Delta_H) \phi_h^H \overline{\phi_h^H} d \sigma_H  \quad + \int_H | \phi_h^{H,\nu} |^2 d \sigma_H = \O(1).
\end{aligned}\label{O1est}\end{equation}
In order to bound the Neumann data from above, we therefore need to show the
first term on the left hand side is essentially positive.

For this we now use our small scale decomposition.  Let $\chi_{in},
\chi_{tan}, \chi_{out}$ be as before, and
\[
1 = (\chi_{in})_{h,\delta}^w + (\chi_{tan})_{h,\delta}^w +(\chi_{out})_{h,\delta}^w 
\]
be the corresponding 2-microlocal partition of unity, with $\delta$ chosen in the range $(1/2, 2/3)$.  We have
\begin{align*}
 \int_H (1 + h^2 \Delta_H) \phi_h^H \overline{\phi_h^H} &d \sigma_H \\
 = \int_H (1 + h^2 \Delta_H) &(\chi_{in})_{h,\delta}^w\phi_h^H
\overline{\phi_h^H} d \sigma_H  \quad + \int_H (1 + h^2 \Delta_H) (\chi_{tan})_{h,\delta}^w
\phi_h^H \overline{\phi_h^H} d \sigma_H \\
 \quad + 
\int_H &(1 + h^2 \Delta_H) (\chi_{out})_{h,\delta}^w  \phi_h^H \overline{\phi_h^H} d \sigma_H
\\
 = \int_H (1 + h^2 \Delta_H) &(\chi_{in})_{h,\delta}^w\phi_h^H
\overline{\phi_h^H} d \sigma_H  \quad + \int_H (1 + h^2 \Delta_H) (\chi_{tan})_{h,\delta}^w
\phi_h^H \overline{\phi_h^H} d \sigma_H \\
 & + \O(h^\infty)
\end{align*}
where we used Proposition~\ref{prop:mass-decay} (or really Corollary~\ref{cor:Ck}) in the last line. 

As estimated previously, on the support of $\chi_{in}$, we have $1 -
R(x', 0, \xi') \geq h^\delta$.  Without loss in generality, assume
$\chi_{in} = \psi^2$ for some $\psi \geq 0,$ 
\[
\psi = \psi( (R(x',0,\xi')-1)/h^\delta) \in S^0_{\Sigma_H, \delta}.
\]
Let $\tchi \in S^0_{\Sigma_H, \delta}$ satisfy $\tchi \equiv 1$ on
$\supp \chi_{in}$ with slightly larger support, say on a set where $1-R(x',
0, \xi') \geq h^\delta/M$ for some large $M$.  Observe that then  
\[
\ell = (1 - \tchi) + h^{-\delta} \tchi \cdot (1-R(x',0,\xi') )  \in
S^\delta_{\Sigma_H,\delta}
\]
satisfies $\ell \geq c_0 >0$.  If $L = \ell^w$, then the G{\aa}rding
inequality (Lemma \ref{L:garding}) implies
\[
\lll L u, u \rrr \geq (c_0 - C h^{1-\delta}) \| u \|^2.
\]
Further,
\begin{align*}
L \chi_{in}^w \phi_h^H & = L (\psi^w)^* \psi^w \phi_h^H \\
& = (\psi^w)^* L  \psi^w \phi_h^H + [L,(\psi^w)^*]   \psi^w \phi_h^H.
\end{align*}
Since, on the support of $\psi$, $\ell = h^{-\delta} (1-R)$, $L$
commutes to leading order with $\psi$, so a crude estimate on the
commutator gives
\[
\|  [L,(\psi^w)^*]   \psi^w \phi_h^H \| = \O(h^{-\delta} h^{3(1-\delta)}) \|
\phi_h^H \|.
\]
The powers of $h$ in this estimate come from $h^{-\delta}$ in the
definition of $\ell$, and 3 powers of $h^{1-\delta}$ because
derivatives can lose $h^{-\delta}$, and in the Weyl calculus, the
second order term in the commutator vanishes by anti-symmetry.  
Hence
\begin{align*}
\lll (1+h^2\Delta_H) (\chi_{in})^w \phi_h^H, \phi_h^H \rrr & =
h^\delta \lll L
(\chi_{in})^w \phi_h^H, \phi_h^H \rrr + \O(h^\infty) \| \phi_h^H \|^2\\
& = h^\delta \lll  (\psi^w)^* L  \psi^w \phi_h^H, \phi_h^H \rrr +
\O(h^{3-3\delta}) \| \phi_h^H \|^2 \\
& = h^\delta \lll  L  \psi^w \phi_h^H, \psi^w \phi_h^H \rrr  +
\O(h^{3-3\delta}) \| \phi_h^H \|^2 \\
& \geq h^\delta  (c_0 - C h^{1-\delta} ) \| \psi^w \phi_h^H \|^2
- C h^{3-3\delta}  \| \phi_h^H \|^2.
\end{align*}


On the other hand, on the support of $\chi_{tan}$, we have $| 1 -
R(x', 0, \xi') | \leq C_2 h^\delta$, so that
\[
\left|  \int_H (1 + h^2 \Delta_H) (\chi_{tan})_{h,\delta}^w
\phi_h^H \overline{\phi_h^H} d \sigma_H  \right| \leq C_2 h^\delta
 \|
\phi_h^H \|^2.
\]
Combining these two estimates, we have
\begin{align*}
& \int_H (1 + h^2 \Delta_H) \phi_h^H \overline{\phi_h^H} d \sigma_H \\
& \geq C_1 h^{\delta} \int_H (\chi_{in})_{h,\delta}^w\phi_h^H
\overline{\phi_h^H} d \sigma_H -
C_1' h^{3-3\delta}  \| \phi_h^H \|^2 \\
& \quad 
-  C_2 h^\delta
\|
\phi_h^H\|^2
+ \O(h^\infty) \\
& \geq -C h^\delta \int_H | \phi_h^H |^2 d \sigma_H,
\end{align*}
since the exterior term is $\O(h^\infty)$, and
provided $3-3\delta \geq \delta$, or $\delta \leq 3/4$ (recall we have
already assumed $\delta <2/3$).  
 Employing  the $h^{-1/4}$ bound of
Burq-G\'erard-Tzvetkov \cite{BGT}, we get
\begin{align*}
\int_H (1 + h^2 \Delta_H) \phi_h^H \overline{\phi_h^H} d \sigma_H
\geq -C h^{\delta-1/2}.
\end{align*}

Since we chose $\delta > 1/2$, this gives, in combination with \eqref{O1est}, 
\begin{align*}
& -C h^{\delta - 1/2} + \int_H | \phi_h^{H,\nu} |^2 d \sigma_H \\
& \leq \int_H (1 + h^2 \Delta_H) \phi_h^H \overline{\phi_h^H} d \sigma_H + \int_H | \phi_h^{H,\nu} |^2 d \sigma_H \\
& = \O(1),
\end{align*}
or, rearranging, 
\[
\int_H | \phi_h^{H,\nu} |^2 d \sigma_H = \O(1),
\]
which proves Theorem~\ref{main}. 

\begin{remark} \label{bterm}
Notice that this computation also shows that 
$$
 \int_H (1 + h^2 \Delta_H) \phi_h^H \overline{\phi_h^H} d \sigma_H = O(1)
$$
since all the other terms in \eqref{O1est} are $O(1)$. 
\end{remark}


\section{Schr\"odinger eigenfunctions: proof of Theorem \ref{main2}} \label{schroedinger}
 Given the semiclassical Schr\"odinger operator $P(h) = -h^2 \Delta_g + V(x),$ we denote the principal symbol by $p(x,\xi) = |\xi|_g^2 + V(x).$ In the following, we work in a collar neighbourhood around $H$ and continue to denote the corresponding Fermi coordinates by $(x',x_n)$ with $H = \{ x_n=0 \}.$ Given a regular energy value $E \in \R,$ we let $\Sigma(E) = \{(x,\xi) \in T^*M; p(x,\xi) = E \}$ be the corresponding level set and
 $$\Sigma_{H}(E) = \{ (x',\xi') \in T^*H; p(x',0;\xi',0) = E \}$$
 be the restriction to $T^*H.$
  Since $H \subset \{ x \in M; V(x) < E \},$ it follows that
 $d_{\xi'}p(x',\xi') \neq 0$ for all $(x',\xi') \in \Sigma_H(E)$ and so the level set $\Sigma_H(E) \subset T^*H$ is a smooth hypersurface.  Consider $L^2$-normalized eigenfunctions $\phi_h \in C^{\infty}(M)$ with
 \begin{equation} \label{geneigenfn}
 P(h) \phi_h = E(h) \phi_h, \,\, |E(h) - E| = O(h).\end{equation}
 The relevant semiclassical 2-microlocal cutoffs to $\Sigma_H(E)$ are 
 $$(\chi_{out,in,tan})^{w}_{h,\delta} = Op_{h}^w    \, \chi_{out,in,tan} \Big( \frac{| \xi' |_{g}^2 + V(x',0) - E}{h^{\delta} }\Big) , \,\, 0 \leq \delta <1.$$
 Since $\Sigma_H(E) \subset T^*H$ is a hypersurface of real principal
 type, there is a natural symbol calculus for these operators just as in subsection \ref{2microlocal} with the associated sharp G{\aa}rding and $L^2$-boundedness results. As  in Lemma \ref{normal form}, the key here is a local normal form result which says that there are compactly-supported $h$-pseudodifferential operators $L_h \in Op_{h}^w(S^{0,-\infty}(T^*H))$  and intertwining, compactly-supported  $h$-Fourier integral operator $T_h: C^{\infty}_{0}(T^*H) \to C^{\infty}_{0}(\R^{n-1})$ with the property that with $\chi = \chi_{tan,in,out},$
 \begin{equation} \label{normal form 2}
 T_h  L_h \chi^{w}_{h,\delta}  T_h^{-1} = \chi^{w}_{h} \left(\frac{\eta_1}{h^{\delta}} \right). \end{equation}
 The proof of Theorem \ref{main2} follows as in the homogeneous case with one  minor change regarding the exterior restricted eigenfunction mass estimates on $h^{\delta}$-scales which we now explain.  
 \subsection{Exterior mass estimates}
The main change we need to make is to replace Sogge's approximate projection operator $\chi_\lambda$ by the corresponding operator depending on the potential $V$. To this end, we define the action $A(x, y)$, for $x, y \in M$ sufficiently close, to be the integral of the quantity $L(x, \xi) := |\xi|^2_g - V(x)$ along a bicharacteristic of $P$ starting
at $(y, \eta)$ and ending at $(x, \xi)$, contained in the energy surface $\{ p = E \}$. (Note that for $x \neq y$ sufficiently close, there is a unique $\eta$ such that there is a short bicharacteristic starting at $(y, \eta)$ and reaching $T^*_xM$ --- see the proof of the lemma below.) Due to our condition that $V < E$ on $H$, $A$ is comparable to the distance function when $x, y$ are sufficiently close to each other and to $H$. 
We then have: 

\begin{lemma}\label{action} Let $\chi$ be a Schwartz function with $\chi(0) = 1$ and with Fourier transform $\hat \chi$ having support in the interval $[\epsilon_0/2, \epsilon_0]$ for sufficiently small $\epsilon_0$. Then the operator 
$$
\chi_{V, \lambda} := \chi\big( \frac{ P(h) - E}{2h} \big)
$$
has kernel of the form 
\begin{equation}
\chi_{V, \lambda}(x,y) = \lambda^{(n-1)/2} a_3(y, x, \lambda) e^{-i\lambda A(x, y)} + R(x, y, \lambda), \quad \lambda = h^{-1}, 
\label{chivl}\end{equation}
where $a_3$ is supported where $A(x, y) \in [(2 C_0)^{-1} \epsilon_0, 2C_0 \epsilon_0]$ for some $C_0 > 1$, and $R$ is smooth with all derivatives $O(\lambda^{-N})$ for every $N$. 
\end{lemma}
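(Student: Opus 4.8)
The plan is to mimic Sogge's construction of the approximate spectral projector $\chi_\lambda = \chi(\sqrt{\Delta}-\lambda)$, but with the wave group replaced by the semiclassical propagator $e^{-itP(h)/h}$ associated to the Schr\"odinger operator, using the Fourier inversion formula for $\chi$. Writing $\chi_{V,\lambda} = \chi\big((P(h)-E)/(2h)\big)$ and using that $\hat\chi$ is supported in $[\epsilon_0/2,\epsilon_0]$, we have
\[
\chi_{V,\lambda} = \frac{1}{2\pi}\int \hat\chi(t)\, e^{it(E - P(h))/(2h)}\, dt,
\]
so the kernel is an integral over $t \in [\epsilon_0/2,\epsilon_0]$ of the kernel of $e^{-itP(h)/(2h)}$, modulated by $e^{itE/(2h)}$. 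The first step is therefore to invoke the standard WKB/FIO parametrix for the semiclassical Schr\"odinger propagator on the (short) time interval $[\epsilon_0/2,\epsilon_0]$: for $\epsilon_0$ smaller than the injectivity radius and with $V<E$ near $H$, the classical flow of $p(x,\xi)=|\xi|_g^2+V(x)$ has no conjugate points or caustics on this time scale restricted to the relevant energy shell, so $e^{-itP(h)/(2h)}$ has kernel of the form $(2\pi h)^{-n}\int e^{i\psi(t,x,\xi)/h} b(t,x,\xi;h)\,d\xi$ with $\psi$ solving the Hamilton–Jacobi equation $\partial_t\psi + p(x,\partial_x\psi) = 0$ and $b$ a classical symbol.

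Next I would perform the $\xi$ and $t$ integrations by stationary phase. The combined phase is $\psi(t,x,\xi)/h + t(E-\cdot)$-type terms; the $\xi$-critical point pins down the momentum $\eta$ at $y$ for which the time-$t$ bicharacteristic from $(y,\eta)$ reaches $T^*_xM$, and the $t$-critical point forces this bicharacteristic to lie on the energy surface $\{p=E\}$ — this uses precisely that $E$ is a regular value and $V<E$ so that $d_\xi p\neq 0$ and short bicharacteristics between nearby points exist and are unique. Evaluating, the critical value of the phase is exactly $-\lambda A(x,y)$, where $A(x,y)=\int L\,dt$ along that energy-surface bicharacteristic, with $L(x,\xi)=|\xi|_g^2 - V(x)$ the Legendre transform / Lagrangian (the sign and the identification of $A$ with $\int(\langle\xi,\dot x\rangle - (p-E))\,dt$ is the routine Hamilton–Jacobi computation). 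The Hessian of the phase in $(\xi,t)$ is nondegenerate for $x\neq y$ close (again by the no-caustic property on this short time scale), so stationary phase in $n$ variables converts $(2\pi h)^{-n}$ into $(2\pi h)^{-n}h^{n/2} = h^{-n/2}$ — but one of the phase variables ($t$, effectively the normal-to-energy-shell direction) has already been "used" against the spectral localization, leaving the Sogge-type power $\lambda^{(n-1)/2}$; more carefully, $\chi_{V,\lambda}$ localizes $P(h)$ to an $O(h)$ window, which accounts for the loss of one power of $\lambda^{1/2}$ relative to the full propagator kernel, yielding the stated $\lambda^{(n-1)/2}a_3(y,x,\lambda)e^{-i\lambda A(x,y)}$ with $a_3$ a classical symbol, bounded with all derivatives uniformly in $\lambda$. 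The support statement $A(x,y)\in[(2C_0)^{-1}\epsilon_0, 2C_0\epsilon_0]$ comes from $\hat\chi$ being supported in $[\epsilon_0/2,\epsilon_0]$ together with $A$ being comparable to $\dist(x,y)$ (and to elapsed time $t$) on the relevant region, using $V<E$.

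Finally, the remainder $R$: the contribution from $\hat\chi$ near $t=0$ is excised by the support condition, and away from the stationary set the phase is nonstationary so nonstationary-phase/integration-by-parts gives $O(\lambda^{-N})$ for all $N$; smoothness of $R$ in $(x,y)$ follows because each integration by parts preserves smoothness. I expect the main obstacle to be the bookkeeping in the stationary-phase computation: verifying cleanly that the critical value of the total phase is exactly $-\lambda A(x,y)$ with the action defined via $L=|\xi|_g^2-V$ (rather than, say, $\langle\xi,\dot x\rangle$), and correctly tracking the single half-power of $\lambda$ that survives — i.e. why the answer is $\lambda^{(n-1)/2}$ and not $\lambda^{n/2}$ or $\lambda^{(n-2)/2}$. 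This is where one must be careful that the $t$-integration is what implements the spectral cutoff and hence does not contribute a gain in $\lambda$ of the usual stationary-phase type. Everything else is the standard FIO parametrix construction for Schr\"odinger propagators on short times, specialized to an energy shell on which the flow is caustic-free because $V<E$ and $E$ is regular.
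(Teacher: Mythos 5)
Your approach is essentially the one the paper alludes to in the remark following the lemma: express $\chi_{V,\lambda}$ via $\hat\chi$, put in the standard WKB parametrix for $e^{-itP(h)/2h}$ with a fibre integral in $\eta$, and do stationary phase in $\eta$ and $t$. The paper's own proof takes a slightly different parametrization — it observes that for small $t>0$ the flowout Lagrangian is graphical over $(x,y,t)$, so the propagator kernel is $(2\pi h)^{-n/2}e^{i\Phi(x,y,t)/h}b(x,y,t,h)$ with no auxiliary variables, and then does stationary phase in $t$ alone. The two routes are equivalent; yours requires one more step ($\eta$-stationary-phase) to reach the paper's starting point.

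There is, however, one genuine gap. You assert that the Hessian of the full phase in $(\xi,t)$ is nondegenerate ``by the no-caustic property on this short time scale.'' That property controls the $\xi$-block of the Hessian (giving a phase $\Phi(x,y,t)$ after the $\xi$-stationary-phase), but it says nothing by itself about the $t$-second-derivative of the reduced phase, i.e.\ about $\partial^2_{tt}\Phi\neq 0$. This is the crux of the lemma, and it is precisely here that the hypothesis $V<E$ on $H$ enters: the paper shows $\partial_t\Phi=\tau$ along the Lagrangian, so one needs $\partial_t\tau\neq 0$, and this is deduced from the fact that the projected bicharacteristic speed $|\partial_t X|=\sqrt{E-V}$ is bounded away from zero. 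Without this, the $t$-critical point could be degenerate and the kernel would not have the stated Airy-free WKB form. You should isolate and prove this nondegeneracy explicitly. Relatedly, your power-counting paragraph is self-contradictory: you write that the $t$-variable ``has already been used against the spectral localization'' and therefore does not contribute a stationary-phase gain, yet you land on $\lambda^{(n-1)/2}$, which is exactly what a genuine nondegenerate $t$-stationary-phase produces. The clean accounting is $(2\pi h)^{-n}\cdot h^{n/2}\cdot h^{1/2}=h^{-(n-1)/2}$: $h^{n/2}$ from the $\xi$-integration and a further $h^{1/2}$ from the nondegenerate $t$-integration, and both are ordinary stationary phase — there is no ``loss'' to be explained away once $\partial^2_{tt}\Phi\neq 0$ is in hand.
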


\begin{proof}
We express $\chi_{V, \lambda}$ in terms of the semiclassical propagator:
\begin{equation}
\chi_{V, \lambda} = (2\pi)^{-1} \int_0^\infty e^{it(P(h) - E)/2h} \hat \chi(t) \, dt.
\label{t-int}\end{equation}
The operator $H(t) e^{itP(h)/2h}$, where $H(t)$ is the Heaviside function, is the forward fundamental solution for the operator $2hD_t - P(h)$, and its microlocal structure is well understood. It is associated to two Lagrangian submanifolds $\Lambda_0$ and $\Lambda$, where $\Lambda_0$ is the conormal bundle to $\{ t = 0, x = y \} \subset T^* (M \times \R)$ and $\Lambda$ is the flowout, in the direction of positive time, from the intersection of $\Lambda_0$ and the characteristic variety $\Sigma_V$ of $2h D_t - P(h)$. This flowout is, by definition, the union of bicharacteristics starting at $\Sigma_V \cap \Lambda_0$, and satisfying the ODE
\begin{equation}\begin{aligned}
\dot t &= 2 \qquad 
&\dot \tau &= 0 \\
\dot x &= -\frac{\partial p}{\partial \xi} \quad 
&\dot \xi &= \frac{\partial p}{\partial x} \\
\dot y &= 0 \quad 
&\dot \eta &= 0.
\end{aligned}\end{equation}
It is not hard to see from this that $(x, y, t)$ form coordinates on $\Lambda$ for $t > 0$ small. Indeed, it is clear that $(y, \eta, t)$ form coordinates, since $(y, \eta)$ form coordinates on $\Lambda \cap \Lambda_0$ and $t$ can be used as a parameter along bicharacteristics. On the other hand, the equations above show that
\begin{equation}
x_i(t) = t \sum_j g^{ij}(x)\eta_j + O(t^2)
\label{xit}\end{equation}
showing that $\partial x / \partial \eta$ is nonsingular for $t > 0$ small, so we may take $(x, y, t)$ instead of $(y, \eta, t)$. So there is a unique function $\Phi(x, y, t)$, smooth for $t > 0$ parametrizing $\Lambda$ locally near $x=y$.  
Moreover, the value of  $\Phi$  is given by Hamilton-Jacobi theory,  by solving the ODE along bicharacteristics
\begin{equation}
\dot \Phi = -\xi \cdot \frac{\partial p}{\partial \xi} - 2\tau + p  = - \Big( g^{ij}(x) \xi_i \xi_j - V(x) \Big) - 2\tau = -L(x, \xi) - 2\tau
\label{Phi-value}\end{equation}
with initial value $\Phi = 0$ at $\Lambda_0 \cap \Sigma_V$. 

We now put this expression into \eqref{t-int} to obtain the kernel of $\chi_{V, \lambda}$:
\begin{equation}
\chi_{V, \lambda} = (2\pi)^{-1} (2\pi h)^{-n/2} \int_0^\infty e^{i\Phi(x, y, t)/h} b(x, y, t, h) e^{-it/2h} \hat \chi(t) \, dt.
\label{t-int-2}\end{equation}
We claim that $\partial^2_{tt} \Phi \neq 0$ for $t > 0$ small. To see this write $\tau(x, y, t)$ and $\eta(x, y, t)$ for the value of $\tau$, respectively $\eta$, on $\Lambda$ at the point parametrized by $(x, y, t)$. 
Then $d_t \Phi = \tau(x, y, t)$, so we need to show that $d_t \tau \neq 0$. We can rotate coordinates so that $g^{ij}(y)$ is diagonal at $y_0$ and $\eta(x_0, y_0, t_0)$ is a multiple of $(1, 0, \dots, 0)$. Write $x$ as a function $x = X(y, \eta_2, \dots, \eta_n, \tau, t)$ since $\eta_1$ is determined by $\eta_2, \dots, \eta_n, \tau$ on $\Lambda \subset \Sigma_V$, and since 
$x$ is determined by following the bicharacteristic starting at $(y, \eta)$ for time $t$. Then we have 
$$
0 = \frac{dx}{dt} = \sum_{j=2}^n \frac{\partial X}{\partial \eta_j} \frac{\partial \eta_j}{\partial t} + \frac{\partial X}{\partial \tau} \frac{\partial \tau}{\partial t} +  \frac{\partial X}{\partial t}.
$$
We have $|\partial_t X| = \sqrt{E - V}$, which is bounded away from zero using our assumption on $V$. On the other hand, $\partial_{\eta_j} X = O(t + |y-y_0|)$ for $j \geq 2$ and for $(x, y, t)$ near $(x_0, y_0, 0)$ using \eqref{xit} and the assumptions on $g^{ij}(y_0)$ and $\eta(x_0, y_0, t_0)$. It follows from the above identity that $\partial_t \tau \neq 0$ for small $t$, showing that $\partial^2_{tt} \Phi \neq 0$, as claimed. 
So we can perform stationary phase in the $t$ variable in \eqref{t-int-2}, obtaining the phase function 
$$
\Phi(x, y, t(x,y)) - \frac{Et(x, y)}{2}
$$
where $t(x,y)$ is the stationary point. Note that stationarity in $t$ requires that $\tau = E/2$, which implies that $p=E$ since $\Lambda$ is contained in $\Sigma_V$. Inserting this in \eqref{Phi-value} and using the constancy of $\tau$ along bicharacteristics gives the value
$$\Phi(x, y, t(x,y)) = -A(x,y) + \frac{Et(x, y)}{2}
$$
according to the definition of the action $A$ above. 
This shows that the phase function is as claimed in the lemma. The power of $\lambda$ follows from \eqref{t-int-2} and stationary phase in $t$, and the properties of $a_3$ and of $R$ follow as in the homogeneous case. 
\end{proof}

\begin{remark} The form of the parametrix for $e^{it(P(h) - E)/2h}$ in \eqref{t-int-2} also follows from the more
standard integral representation with phase $S(t,x,\eta) - y \cdot \eta$ (see \cite[Section 10.2]{Zw-book}) by stationary phase in $\eta$   for $t \in [(2C_0)^{-1}\epsilon_0, 2C_0 \epsilon_0]$ with $\epsilon_0 >0$ sufficiently small. Here, $S(t,x,\eta)$ solves the Hamilton-Jacobi equation $\partial_t S =  p(x,d_xS) $ with   $S(0,x,\eta) = x \cdot \eta.$
\end{remark}

\begin{remark} The reason for choosing the factor  $2$ in the denominator of the expression $\chi \big( (P(h) - E)/2h \big)$ is to match as closely as possible to Sogge's approximate projection operator $\chi_\lambda = \chi(\sqrt{\Delta} - \lambda)$. This could be written 
$\chi \big( (\Delta - \lambda^2)/(\sqrt{\Delta} + \lambda) \big)$, which is very close to 
$\chi \big( (\Delta - \lambda^2)/2\lambda \big)$ when $\sqrt{\Delta}$ is localized close to $\lambda$. In semiclassical notation this is $\chi \big( (h^2 \Delta - 1)/2h \big)$. 
\end{remark}

\begin{remark} The same representation \eqref{chivl} holds for the operator $\chi \big( (P(h) - E(h))/2h \big)$, where $E(h)$ lies in the interval $[E - Ch, E + Ch]$. This is clear since replacing $E$ with $E + ch$ amounts to a translation of $\chi$, or equivalently a modulation of $\hat \chi$, which does not change any essential properties of $\chi$. Moreover, all statements in Lemma~\ref{action} hold uniformly for $c \in [-C, C]$.
\end{remark}

 Now we follow the argument of Section~\ref{extmass} almost verbatim. 
Using the identity
$\chi\big( \frac{ P(h) - E(h)}{h} \big) \phi_h = \phi_h$ and with the same notation as in section \ref{extmass}, one has the local formula
\begin{align}\label{outsidemass2}
L_h (\chi_{out})_{h,\delta}^{w}(x',hD_{x'}) \phi_h^{H} &=h^*T_h^{-1}  \circ  \int    e^{i(y - y'')\cdot \eta/h}  e^{i\Phi(y'', y', v)/h} e^{-i A(y', x)/h}  \end{align}
 $$ \times  \chi_+(\eta_1/h^\delta) a_1(y'', y', v, h)  \hat{\chi}(t) a_3(y', x, h) \phi_h(x) \, dy' \, dv  \, dy'' \, dx \, d\eta + O(h^\infty). $$
 \medskip

Let $\Psi$ denote the sum of the phase functions in \eqref{outsidemass2}. 
 We integrate by parts  in $(y',y'',v)$ and we compute the critical set in these variables:
\begin{equation}\begin{gathered}
d_{v} \Psi = 0 \implies \rho(y', -d_{y'} \Phi(y'', y', v)) = (y'', d_{y''}\Phi(y'', y', v)) \\
d_{y''} \Psi = 0 \implies \eta = d_{y''}\Phi(y'', y', v) \\
d_{y'} \Psi = 0 \implies d_{y'} \Phi(y'', y', v) =  d_{y'} A(y', x).
\end{gathered}\end{equation} 
This implies that $\rho(y',-d_{y'}A) = (y'',\eta)$ with $\rho^*\eta_1 = |\xi'|_g^2 + V(y',0) - E$ and so,
$$ d_{v,y',y''} \Psi = 0 \implies \eta_1 = |d_{y'}A|^2_g + V(y',0) - E.$$

Taylor expansion yields smooth functions $\gamma_i,\gamma_i', \gamma_i''; 1 \leq i \leq n-1$  with
\begin{align} \label{crit}
 &e^{i\Psi(x, y, y', y'',t, r\omega, \eta)/h}  \nonumber \\
 &= \Big( \frac{h}{i} \Big)^N \cdot \Big( \frac{  \gamma_i d_{v_i} + \gamma'_i d_{y'_i}  + \gamma''_i d_{y''_i}  }{  \eta_1 - (  |d_{y'}A|^2_{g} + V(y',0) -E ) } \Big)^N e^{i\Psi(x, y, y', y'', v, \eta)/h} \nonumber \\
 &= \Big( \frac{h}{i} \Big)^N \cdot \Big( \frac{  \gamma_i d_{v_i} + \gamma'_i d_{y'_i}  + \gamma''_i d_{y''_i}  }{  \eta_1 +  |d_{y_n}A|^2 } \Big)^N e^{i\Psi(x, y, y', y'', v, \eta)/h} 
\end{align}
since $A$ satisfies the Hamilton-Jacobi equation $ |d_{y_n}A|^2 + |d_{y'} A|_g^2 = E - V$. 

As in the homogeneous case, using (\ref{crit}), we integrate by parts.  Differentiation in the $v,y''$ coordinates is harmless in that it does not produce any singular behaviour in $h.$ Differentiation in the $y'$-variables is more subtle  in the case where the $d_{y'}$ derivatives hit the term in the denominator in (\ref{crit}) involving $d_{y_n}A,$ one must bound a ratio of the form
\begin{equation} \label{critfraction}
 \frac{ h  \, |d_{y_n} A|  \cdot   |d_{y'}d_{y_n}A| }{ \Big( \eta_1 +  |d_{y_n}A|^2  \Big)^2}.\end{equation}
Here, $|d_{y_n}A|^2$ plays the role of $\cos ^2 \theta$ in the homogeneous case. We split (\ref{critfraction}) into the two cases $|d_{y_n}A| \geq h^{\delta/2}$ and $|d_{y_n}A| \leq h^{\delta/2}.$  In both cases, we get an $O(h^{1-3\delta/2})$ bound for (\ref{critfraction}) and this imposes the constraint that $\delta < 2/3$ as in Proposition \ref{prop:mass-decay}. To summarize, as in the homogeneous case,
\begin{equation}\label{massH2}
(\chi_{out})_{h,\delta}^{w} \phi_h^H = O(h^\infty) \text{ in } L^2(H).
\end{equation}

 As for the Rellich formula analogue of (\ref{Rellich}), we note that the real-valued potential $V$ cancels in Green's formula to give precisely the same RHS  as in (\ref{Rellich}).  Indeed, since $V \in C^{\infty}(M;\R)$ and $(-h^2 \Delta + V - E(h)) \phi_h = 0,$
 \begin{align} \label{Rellich2}
\frac{i}{h}  \int_{M_-} [-h^2 \Delta + V - E(h),  \, \chi(x_n)  hD_n] \phi_h \overline{\phi_h}
dx \nonumber \\
 = \int_H ((hD_n)^2  \phi_h)|_H \overline{\phi_h}|_H d \sigma_H 
& + \int_H (h D_n \phi_h)|_H \overline{hD_n \phi_h}|_H d \sigma_H .
\end{align}
 
 Given the mass estimate in (\ref{massH2}), the G{\aa}rding and $L^2$-boundedness results in the 2-microlocal operator calculus and the commutator formula in (\ref{Rellich2}),  the rest of the proof of Theorem \ref{main2} follows in the same way as in Theorem \ref{main}. \qed

\section{Optimality: an example with spherical harmonics}
\label{S:sharp}
\def\p{\partial}
In this section, we show that the Neumann data restriction estimate in
Theorem \ref{main} is optimal in the case of a highest weight spherical
harmonic.  To fix our notation, we consider $M = \mathbb{S}^2$ with
the parametrization in $\reals^3$:
\[
(x_1, x_2, x_3) = ( \sin \phi \cos \theta, \sin \phi \sin \theta, \cos
\phi),
\]
where $0 \leq \phi \leq \pi$ is the angle from the north pole and $0
\leq \theta \leq 2 \pi$ is the angle in the $x_1x_2$ plane measured
from the $x_1$ axis.  The induced metric is the usual spherical
metric:
\[
g = d \phi^2 + \sin^2 \phi d \theta^2,
\]
and the volume form is
\[
dV = \sin \phi d \phi d \theta.
\]
The Laplacian is the usual angular part of the polar Laplacian:
\[
-\Delta_g = -\frac{1}{\sin \phi} \p_\phi \sin \phi \p_\phi -
\frac{1}{\sin^2 \phi } \p_\theta^2.
\]
The eigenfunctions for $-\Delta_g$ are homogeneous harmonic
polynomials in $\reals^3$ restricted to $M$.  The highest weight harmonic of
order $k$ is
given by restricting the polynomial $(x_2 + i x_1)^k$ to $M$.  Let
\[
u_k = (x_2 + i x_1)^k |_M = i^k \sin^k \phi e^{-ik \theta}.
\
\]
As an eigenfunction, it satisfies
\[
-\Delta_g u_k = k(k+1) u_k.
\]
As written, $u_k$ is of course not normalized.  Let us compute
\begin{align*}
\| u_k \|^2_{L^2(M)} & = \int_0^{2 \pi } \int_0^\pi | u_k |^2 \sin
\phi d \phi d \theta \\
& = \int_0^{2 \pi } \int_0^\pi \sin^{2k+1}
\phi d \phi d \theta \\
& = 2 \pi  \int_0^\pi \sin^{2k+1}
\phi d \phi ,
\end{align*}
which, after a computation, is seen to be equal to
\[
4 \pi \left( \frac{2k}{2k+1} \right) \left( \frac{2k-2}{2k-1} \right)
\left( \frac{2k-4}{2k-3} \right) \cdots \left( \frac{2}{3} \right).
\]

There is an orthonormal basis of the tangent space at any point of $M$
given by the vectors
\[
X = \frac{1}{\sin \phi} \p_\theta, \text{ and } Y = \p_\phi,
\]
since $g(X,X) = g(Y,Y) = 1$ and $g(X,Y) = 0$.  If we let $H \subset M$
be the periodic geodesic in the $x_1x_3$ plane originating from the north pole, then $H$ is parametrized by
\[
(x_1,x_2,x_3) =  ( \sin \phi , 0, \cos
\phi),
\]
for $0 \leq \phi \leq 2\pi$.  Notice there is no ambiguity with our
chart by taking $\phi$ in this extended range since we have frozen
$\theta = 0$.  Clearly the vector $Y$ is tangent to $H$,
so the normal derivative of $u_k$ will be given by $X u_k$.  We want
to compute $\| X u_k \|_{L^2(H)}$.  The induced metric on $H$ is the
usual circle metric $d \phi^2$, so the volume form is just $d \phi$.
We compute:
\[
X u_k = i^{k-1} k \sin^{k-1} \phi e^{-i k \theta}.
\]
Hence we want to compute
\begin{align*}
\| X u_k \|_{L^2(H)}^2 & = \int_0^{2\pi} k^2 \sin^{2k-2} \phi d \phi.
\end{align*}
Another long computation shows this to be equal to
\[
k^2 \pi \left( \frac{2k-3}{2k-2} \right) \left( \frac{2k-5}{2k-4} \right)
\left( \frac{2k-7}{2k-6} \right) \cdots \left( \frac{3}{4} \right).
\]

In order to show Theorem \ref{main} is sharp, we are interested in
bounding the ratio
\[
\frac{ \| X u_k \|_{L^2(H)}^2}{k^2 \| u_k \|^2_{L^2(M)}}
\]
from below.  We compute
\begin{align*}
\frac{ \| X u_k \|_{L^2(H)}^2}{k^2 \| u_k \|^2_{L^2(M)}} & = \frac{\pi \left( \frac{2k-3}{2k-2} \right) \left( \frac{2k-5}{2k-4} \right)
\left( \frac{2k-7}{2k-6} \right) \cdots \left( \frac{3}{4} \right)}{4 \pi \left( \frac{2k}{2k+1} \right) \left( \frac{2k-2}{2k-1} \right)
\left( \frac{2k-4}{2k-3} \right) \cdots \left( \frac{2}{3} \right)} \\
& = \frac{1}{4 \left( \frac{2k}{2k+1} \right) \left( \frac{2k-2}{2k-1}
  \right) } \prod_{j=1}^{k-2} \frac{  \left( \frac{2k-2j-1}{2k-2j}
  \right) }{ \left( \frac{2k-2j-2}{2k-2j-1}
  \right) } \\
& = \frac{1}{4 \left( \frac{2k}{2k+1} \right) \left( \frac{2k-2}{2k-1}
  \right) } \prod_{j=1}^{k-2} \frac{  \left( {2k-2j-1}
  \right)^2 }{ \left( {2k-2j-2}  \right) \left( 2k-2j \right)}.
\end{align*}
We observe that $(2k -2j -1)^2 = (2k-2j-2)(2k-2j) + 1$, so each factor
in the product is bounded below by $1$.  Hence the whole product is
bounded below by a positive constant, independent of $k$.  This shows
Theorem \ref{main} is sharp.

\begin{remark} In fact, Theorem~\ref{main} is sharp for any Riemannian manifold $(M,g)$ and any hypersurface $H$. To see this, we note that standard wave equation methods give an asymptotic of the form
$$
\sum_j \rho(\lambda - \lambda_j) \big| \phi_h^{H, \nu}(x) \big|^2 \sim \frac{\lambda^{n-1}}{n \operatorname{vol}(M)},
$$
where $\rho$ is a function with smooth compactly supported Fourier transform, with $\hat\rho(t) = 1$ for $t$ in  a neighbourhood of $0$. See for example \cite{Ho}. 
This implies that for a sufficiently large $C$, 
$$
\sum_{\lambda_j \in [\lambda, \lambda + C]}  \big| \phi_h^{H, \nu}(x) \big|^2 \geq c\lambda^{n-1}. 
$$
Integrating over $H$ we find that 
$$
\sum_{\lambda_j \in [\lambda, \lambda + C]}  \big\| \phi_h^{H, \nu} \big\|_{L^2(H)}^2 \geq c\lambda^{n-1}. 
$$
Since the number of $\lambda_j$ in the range $[\lambda, \lambda + C]$ is at most $C' \lambda^{n-1}$, we can choose for each $m$ an eigenfunction $\phi_{\lambda_j}$ such that 
$\lambda_j \in [m, m+C]$ and $\big\| \phi_h^{H, \nu} \big\|_{L^2(H)}^2 \geq c' > 0$, showing the optimality of Theorem~\ref{main} for $(M, g)$ and $H$.
\end{remark}

\end{document}